\documentclass{amsart}
\usepackage[utf8]{inputenc}
\usepackage{amsfonts}
\usepackage{amsmath}
\numberwithin{equation}{section}
\usepackage{amssymb}
\usepackage{float}
\usepackage{tikz-cd}
\usepackage{amsthm}
\usepackage{enumerate}
\usepackage{tikz}
\usepackage{graphicx}
\usepackage{hyperref}
\usepackage[capitalise]{cleveref}
\usepackage{slashed}  % For letters with a slash
\usepackage{upgreek}
\usepackage[top=3cm,bottom=3cm,left=2.5cm,right=2.5cm,headsep=10pt,letterpaper]{geometry} % Page margins
\usepackage{fancyhdr}

\usepackage[
    backend=biber,
    style=alphabetic,
    sorting=nyt
]{biblatex}
\numberwithin{equation}{section}
\theoremstyle{plain}
\newtheorem*{claim}{Claim}

\newtheorem{theorem}{Theorem}[section]
\newtheorem{proposition}[theorem]{Proposition}
\newtheorem{lemma}[theorem]{Lemma}
\newtheorem{corollary}[theorem]{Corollary}

\theoremstyle{definition}

\theoremstyle{remark}
\newtheorem{remark}[theorem]{Remark}

\newcommand{\Sph}{\mathbb{S}^{2}}

\newcommand{\diam}{\operatorname{diam}}

\newcommand{\RR}{\mathbb{R}}

\newcommand{\MM}{\mathbb{M}}
\DeclareMathOperator{\Ric}{Ric}
\DeclareMathOperator{\sn}{sn}
\DeclareMathOperator{\cs}{cs}
\DeclareMathOperator{\ct}{ct}
\DeclareMathOperator{\csch}{csch}

\title[A sharp lower bound for the First Nonzero Neumann Eigenvalue]{A Sharp Diameter-Dependent Lower Bound for the First Nonzero Neumann Eigenvalue of Geodesic Triangles in Space Forms}

\author{Shoo Seto}
\address{Department of Mathematics, California State University, Fullerton, CA 92834}
\email{shoseto@fullerton.edu}

\author{Guofang Wei}
\address{Department of Mathematics, University of California, Santa Barbara, CA 93106}
\email{wei@math.ucsb.edu}
\thanks{G. Wei is partially supported by NSF DMS 2403557.}

\author{Yusen Xia}
\address{Department of Mathematics, University of California, Santa Barbara, CA 93106}
\email{yusen@ucsb.edu}
\thanks{Y. Xia is partially supported by NSF DMS 2403557.}

\keywords{Neumann eigenvalue, geodesic triangles and hot spot}
\begin{document}

\begin{abstract}
We prove a sharp lower bound for the first nonzero Neumann eigenvalue of geodesic triangles of given diameter in two-dimensional space forms. The bound is given by the first positive radial Neumann eigenvalue of an one dimensional model; it is approached by degenerating isosceles triangles. When \(K>0\) and \(D=\pi/(2\sqrt K)\), equality is attained precisely by birectangular triangles. We also prove a hot-spots theorem for non-acute spherical triangles of diameter at most \(\pi/2\), and establish antisymmetry and eigenvalue monotonicity for isosceles spherical triangles of diameter \(\pi/2\).
\end{abstract}
\maketitle

\tableofcontents

\section{Introduction}
Given a Riemannian manifold $(M,g)$ and a connected bounded domain $\Omega \subset M$ with piecewise smooth boundary we consider the Neumann eigenvalue problem of the Laplacian
\begin{equation*}
    \begin{cases}
        \Delta u + \mu u=0 &\text{ in }\Omega\\
        \nabla_\nu u = 0 &\text{ on }\partial \Omega,
    \end{cases}
\end{equation*}
where $\nabla_\nu$ denotes the (outward) normal derivative defined at $C^1$ points of $\partial\Omega$.  For bounded domains the spectrum is discrete and its eigenvalues can be ordered as 
\begin{align*}
    0=\mu_0 <\mu_1 \leq \mu_2\leq \mu_3\leq \cdots \to \infty.
\end{align*}
The eigenvalues of the Laplacian provide a fundamental link between analysis and geometry. They describe the characteristic modes of diffusion and vibration, while the low eigenvalues control quantitative properties such as decay to equilibrium and Poincaré inequalities. At the same time, these eigenvalues depend sensitively on the geometry of both the ambient space and the domain, including its dimension, scale, shape, curvature, and boundary geometry. A central problem in spectral geometry is therefore to determine which geometric constraints yield sharp bounds for the spectrum and to identify the domains for which equality occurs.

The first positive Neumann eigenvalue of a bounded domain $\Omega \subset M$ is given variationally as
\begin{equation*}
    \mu_1^N(\Omega):=\inf_{\substack{u\in H^1(\Omega)\backslash\{0\}\\\int_\Omega udV=0}} \frac{\int_{\Omega}|\nabla u|^2dV}{\int_\Omega u^2dV}.
\end{equation*}
Note that, unlike in the Dirichlet case ($u=0$ on $\partial \Omega$), the relation between the space of admissible functions for subsets $\Omega'\subset\Omega \subset M$ is not immediately clear; thus the Neumann eigenvalues do not (necessarily) have the domain monotonicity property, not even for triangles; see \cite{LaugesenSiudeja2010,FreitasKennedy2025}.

For the Neumann problem, since nonzero constants are eigenfunctions with eigenvalue zero,
\begin{align*}
    \mu_1^N-\mu_0^N=\mu_1^N
\end{align*} is called the fundamental gap.  Outside of highly symmetric domains, the gap cannot be computed explicitly.

Obtaining sharp lower bounds to the gap has been an important program within geometric analysis.  For bounded convex Euclidean domains $\Omega$ with diameter $D$, the Payne-Weinberger inequality \cite{PayneWeinberger1960} gives a lower bound 
\begin{equation}\mu_1^N(\Omega)\geq \frac{\pi^2}{D^2}.
\end{equation}
The constant is approached by domains degenerating to a line segment. In general for bounded convex domains of manifolds with $\Ric \geq (n-1)K$, there is lower bound given by one dimensional comparison model \cref{eq:convex-model-comparison}, 
\begin{equation*}
    \mu_1^N\geq \bar{\mu}_K(D) > \frac{\pi^2}{D^2}+\frac{n-1}{2}K,
\end{equation*}
see \cite{BenAndrews2013}, also \cite{Kroger1992SpectralGap, BakryQian2000}.

Restricting the class of domains to triangles gives a strictly stronger constant.  For every nondegenerate Euclidean triangle $T$ of diameter $D$, the result of Laugesen and Siudeja \cite{LaugesenSiudeja2010} gives the estimate
\begin{align}
    \mu_1^N(T) > \frac{j_{1,1}^2}{D^2}  \label{mu-lower bound for R2}
\end{align}
where $j_{1,1} \approx 3.83$ is the first positive root of the Bessel function $J_1$. Equality is not attained by any nondegenerate triangle and is approached by a sequence of acute isosceles triangles degenerating to a line segment.   Laugesen-Siudeja also show an isosceles symmetry transition.  For an isosceles Euclidean triangle, the fundamental Neumann eigenfunction is symmetric across the axis when the aperture is less than $\pi/3$, and antisymmetric when the aperture is greater than $\pi/3$.  At the equilateral triangle, the first positive eigenvalue has multiplicity two.

In this paper, we treat the cases $\mathbb M^2_K$, the complete simply connected space form with constant curvature $K$, and $T \subset \MM^2_K$ a geodesic triangle, and give an optimal lower bound for the Neumann fundamental gap:

\begin{theorem}
\label{The Main Theorem}
Let $\MM_K^2$ be the complete simply connected two-dimensional
Riemannian manifold of constant sectional curvature $K\in\mathbb R$,
and let $T\subset \mathbb M_K^2$ be a nondegenerate geodesic triangle with $
D:=\operatorname{diam}(T).$
Assume
\[
D \in (0, \infty) \ 
\text{if }K\leq0, \ \ \ \ \ \ \ 
D \in (0, \frac{\pi}{2\sqrt K}] \ \text{if }K>0.
\]
Let $\nu_K(D)$ denote the first positive eigenvalue of the radial
Neumann problem
\[
-\bigl(\operatorname{sn}_K(r)J'(r)\bigr)'
=
\nu_K(D)\operatorname{sn}_K(r)J(r),
\qquad
0<r<D,
\]
with
\[
J'(0)=J'(D)=0,
\]
where $\operatorname{sn}_K(r)$ is defined in \eqref{eqn:sn_k}.

Then
\begin{equation}
\mu_1(T)\geq\nu_K(D).  \label{eq: mu lower bound}
\end{equation}

If either $K\leq0$, or $
K>0 \ \text{and} \ 
D<\frac{\pi}{2\sqrt K},$
then the inequality \eqref{eq: mu lower bound} is strict.

Moreover, the bound is sharp:
\[
\inf_{\operatorname{diam}(T)=D}\mu_1(T)
=
\nu_K(D).
\]
\begin{enumerate}
    \item If $K \leq 0$, or if $K>0$ and $D<\pi /(2 \sqrt{K})$, the infimum is approached by isosceles triangles whose two equal sides have length $D$ and whose apex angle tends to zero.
    \item If $K>0$ and $D=\frac{\pi}{2\sqrt K},$
then
\[
\nu_K(D)=6K,
\]
and equality $\mu_1(T)=6K$ holds if and only if $T$ is birectangular, namely, some vertex is
joined to the other two vertices by sides of length
$\pi/(2\sqrt K)$.
\end{enumerate}

\end{theorem}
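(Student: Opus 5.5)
The proof will follow the Laugesen–Siudeja strategy, adapted to curved space forms. It has three parts: first locate the hot spot of the first eigenfunction at a vertex, then run a one-dimensional radial comparison centered at that vertex, and finally build test functions on thin isosceles triangles for sharpness.

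\textbf{Reduction to a vertex-centered radial problem.} Let $u$ be a first Neumann eigenfunction on $T$ with eigenvalue $\mu_1(T)$. The first step is to show that, after a suitable choice, $u$ attains its maximum (or minimum) at a vertex $p$ of $T$ and is monotone in a suitable sense along geodesics issuing from $p$.
\begin{enumerate}
\item For non-acute triangles I would use a hot-spots argument. Continuous reflection or a polarization argument in $\MM^2_K$ should give that $u$ has no interior critical extremum, so its extremes sit at the endpoints of the longest side.
\item For acute triangles I would instead use the variational characterization directly. The aim is to compare $\mu_1(T)$ with the Neumann problem on the geodesic sector $S_p=\{x: d(p,x)<D,\ \angle \in \text{angle at } p\}$, which contains $T$ because $\operatorname{diam} T = D$.
\end{enumerate}

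\textbf{Radial comparison.} The key inequality is a Payne–Weinberger type slicing in geodesic polar coordinates $(r,\theta)$ around the vertex $p$. Here $dV=\operatorname{sn}_K(r)\,dr\,d\theta$, and $T$ is the region $\{0<r<R(\theta),\ \theta\in(0,\alpha)\}$ with $R(\theta)\le D$.

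I would decompose $T$ into thin geodesic sectors by choosing rays $\theta_i$ such that $\int u\,dV$ vanishes on each piece. This is possible by a continuity argument that splits mass, as in Payne–Weinberger. On each thin sector the Rayleigh quotient of $u$ is bounded below by the one-dimensional Neumann problem
\[
-(\operatorname{sn}_K(r) w)'=\lambda\, \operatorname{sn}_K(r)\, w \quad \text{on } (0,R(\theta)).
\]
Since the radial gradient satisfies $|\partial_r u|\le |\nabla u|$, the quotient on each piece is at least $\nu_K(R(\theta))$.

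I then need to check that $\nu_K(R)$ is nonincreasing in $R$. This holds for all $R$ when $K\le 0$, and for $R\le \pi/(2\sqrt K)$ when $K>0$. It should follow from a Hadamard-type variation formula, $\partial_R\nu=-\operatorname{sn}_K(R)\,J(R)^2(\nu-\ldots)/\!\int J^2 \operatorname{sn}_K$, or from a Sturm comparison. Granting monotonicity, summing over the pieces gives $\mu_1(T)\ge\nu_K(D)$.

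Strictness comes from the equality case. Equality forces $u$ to be radial about $p$ with $\partial_\theta u=0$ and $R(\theta)\equiv D$ on the support, which is impossible for a triangle: its third side is a geodesic, not a circle arc. The exception is $K>0$, $D=\pi/(2\sqrt K)$. There the circle $r=\pi/(2\sqrt K)$ is itself a geodesic, so the birectangular triangle is exactly the sector. For it $J=\cos(\sqrt K r)$ and $\nu=6K\cdot$ appropriately; more precisely the $\ell=1$ spherical harmonic restricted to the hemisphere-type sector gives $\nu_K(D)=6K$ via the degree-2 Legendre function $P_2(\cos\sqrt K r)$, which has vanishing derivative at $r=\pi/(2\sqrt K)$.

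\textbf{Sharpness and main obstacle.} For sharpness, on the isosceles triangle with legs $D$ and apex angle $\epsilon\to0$, I would use the test function $J(r)$, where $r$ is the distance from the apex, adjusted by a constant to have mean zero. Its Rayleigh quotient tends to $\nu_K(D)$, because the triangle differs from the sector of radius $D$ by a region of relative area $o(1)$.

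I expect the main obstacle to be the reduction step, namely justifying the radial slicing for acute triangles. The pure Payne–Weinberger partition yields only the convex-domain bound $\bar\mu_K(D)$, which is a one-dimensional problem with weight $\operatorname{sn}_K$ replaced by width. To obtain the sector weight $\operatorname{sn}_K(r)$, the slicing must be genuinely centered at a vertex, and that requires knowing where the extremum of $u$ sits. For acute triangles I would first try the Laugesen–Siudeja device of a symmetrization and rearrangement argument: compare $u$ with its angular average about a vertex, then use the trace/centroid argument with the three vertex-centered test functions $J(d(p_i,\cdot))$ and the fact that their sum of quotients controls $\mu_1$. Handling the non-unique vertex choice and the curvature corrections in this step is where I expect most of the work to lie.
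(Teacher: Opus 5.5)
\textbf{Gap: the radial slicing step fails.} Your lower-bound argument breaks at this step, and locating the hot spot does not repair it. Suppose all cuts are rays through a fixed vertex $p$. Put $g(\theta)=\int_0^{R(\theta)}u(r,\theta)\operatorname{sn}_K(r)\,dr$ and $G(\theta)=\int_0^{\theta}g$.

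\begin{itemize}
\item A partition into sectors with $\int u\,dV=0$ on each piece is exactly a set of zeros of $G$. You know $G(0)=G(\alpha)=0$, but no intermediate-value argument produces interior zeros. Payne--Weinberger get one only because a bisecting line can be rotated through all directions, which swaps the two halves; fixing the pivot removes that freedom.
\item Your thin-piece estimate needs partitions with mesh tending to zero. That forces $G\equiv0$, i.e.\ $u$ has weighted mean zero on every ray from $p$. This is essentially the equality configuration and fails in general. For instance, in the thin flat isosceles regime with $p$ a base vertex, replacing $u$ by its leading-order profile $J(d(A,\cdot))$ gives $G<0$ on the whole open interval.
\item Your other devices only give upper bounds or nothing. $T\subset S_p$ says nothing, since Neumann eigenvalues are not domain monotone. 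Inserting $J(d(p_i,\cdot))$ as trial functions in the Rayleigh quotient bounds $\mu_1(T)$ only from above.
\end{itemize}

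So the inequality, its strictness, and the ``only if'' half of the birectangular rigidity are all unproved. Your sharpness construction matches the paper's. In the $6K$ computation, $\cos(\sqrt K r)$ is wrong: it has eigenvalue $2K$ and nonzero derivative at $\pi/(2\sqrt K)$. But $P_2(\cos\sqrt K r)=\tfrac12(3\cos^2(\sqrt K r)-1)$ is right and has exactly one zero, so it is the first radial mode.

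\textbf{Missing idea.} Use the vertex-centered radial functions not as trial functions but as exact solutions in Green's second identity. Suppose $\mu=\mu_1(T)\le\nu_K(D)$ and set $V_p=J_\mu(d(p,\cdot))$, so $-\Delta V_p=\mu V_p$. Then $\partial_\nu V_p=0$ on the two sides through $p$, and $\partial_\nu V_p<0$ on the open opposite side, for three reasons:
\begin{itemize}
\item that side lies strictly within distance $D$ of $p$, by the interpolation formula for $\operatorname{cs}_K(d(p,X))$ along a geodesic (this is where $D\le\pi/(2\sqrt K)$ enters);
\item $J_\mu'<0$ on $(0,D)$, by a Wronskian comparison of $-J_\mu'$ with $-J_{\nu_K(D)}'$;
\item $\partial_\nu d(p,\cdot)>0$ on that side.
\end{itemize}
Green's identity then gives $\int u\,\partial_\nu V_p\,ds=0$ over the opposite side. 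Hence $u$ changes sign there, since vanishing on an open arc would give zero Cauchy data. Doing this at all three vertices, the nodal graph meets all three open sides. An Euler count then gives at least three nodal domains, contradicting Courant.

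The strict distance bound fails only when $K>0$, $D=\pi/(2\sqrt K)$, and both sides at $p$ have length $D$. This is exactly the non-strict case, and it yields the birectangular characterization. No hot-spot information and no acute/non-acute split is needed.
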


This theorem extends the sharp result of Laugesen--Siudeja from Euclidean triangles to spherical and hyperbolic triangles and places all three geometries under a unified comparison principle. By setting $K=0$ we immediately recover \eqref{mu-lower bound for R2}, see \cref{rem:Euclidean-specialization}. Our proof is completely different. We  do not rely on deformations in $\RR^2$ or symmetry of the eigenfunctions of isosceles triangle, but use direct comparison and topology of nodal set.

We also obtain an explicit lower bound for $\nu_K(D)$ in \cref{eq: nu explicite lower bound} and \cref{nu lower bound K<0}. 

As $\nu_K(D)$ is monotone decreasing in $D$ (see \cref{lem:nu-monotone}), or using \cref{eq: nu explicite lower bound}, we immediately have the following in the case $K=1$. 
\begin{corollary}
Let $T\subset\mathbb S^2$ be a nondegenerate geodesic triangle with $D:=\operatorname{diam}(T) \le \frac{\pi}{2}.$
Then
\[
\mu_1(T) \ge 6,
\]
and the equality holds iff $T$ is birectangular. 

\end{corollary}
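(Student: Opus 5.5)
The corollary should follow directly from \cref{The Main Theorem} with $K=1$, together with the monotonicity of $\nu_K$ in $D$ (\cref{lem:nu-monotone}) and the explicit value $\nu_1(\pi/2)=6$. Let $T\subset\mathbb S^2$ have $D=\operatorname{diam}(T)\le \pi/2$.

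I would first verify, or take from the main theorem, that $\nu_1(\pi/2)=6$. With $K=1$ we have $\operatorname{sn}_1(r)=\sin r$, and the radial equation $-(\sin r\,J')'=\nu\sin r\,J$ is the zonal Legendre equation in $\cos r$. The function $J(r)=\cos^2 r-\tfrac13$, i.e.\ $P_2(\cos r)$, satisfies it with $\nu=6$. It also satisfies the Neumann conditions: $J'(r)=-\sin 2r$ vanishes at $r=0$ and at $r=\pi/2$. Since $J'$ does not vanish on $(0,\pi/2)$, $J$ is monotone there, so it has exactly one interior zero. By Sturm--Liouville oscillation theory, $J$ is therefore the first nonconstant Neumann eigenfunction, and $\nu_1(\pi/2)=6$.

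Next, since $\nu_1$ is decreasing in $D$ by \cref{lem:nu-monotone}, we get $\nu_1(D)\ge\nu_1(\pi/2)=6$, with strict inequality when $D<\pi/2$. This uses strict monotonicity; if the lemma only gives weak monotonicity, the strict part is supplied instead by the strict inequality in \cref{The Main Theorem} for $D<\pi/2$. Combining with \eqref{eq: mu lower bound} gives $\mu_1(T)\ge\nu_1(D)\ge 6$.

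For the equality case, suppose $\mu_1(T)=6$. If $D<\pi/2$, the main theorem gives $\mu_1(T)>\nu_1(D)\ge 6$, which is impossible. Hence $D=\pi/2$, and part (2) of \cref{The Main Theorem} says equality holds exactly when $T$ is birectangular. For the converse, a birectangular triangle has some vertex at distance $\pi/2$ from the other two. All points of $T$ then lie within $\pi/2$ of that vertex, so $D=\pi/2$, and part (2) gives $\mu_1(T)=6$.

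The only delicate point is the strictness for $D<\pi/2$, which the main theorem provides. No step is a genuine obstacle.
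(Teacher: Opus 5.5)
Your proposal is correct and follows the paper's route: apply \cref{The Main Theorem} with $K=1$, use the strict monotonicity of \cref{lem:nu-monotone} to get $\nu_1(D)\ge\nu_1(\pi/2)=6$ (the paper also mentions \eqref{eq: nu explicite lower bound} as an alternative), and read the equality case off part (2). One small fix in your converse: $d(A,\cdot)\le\pi/2$ on $T$ does not by itself bound the diameter, since the base $BC$ has length equal to the apex angle. The conclusion $D=\pi/2$ instead follows at once from the hypothesis $D\le\pi/2$ together with $AB=\pi/2$.
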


\begin{remark}
 The above estimate has applications to the boundary regularity of the Neumann harmonic function in a  conical region in $\mathbb R^3$,  which leads to the estimate of the singular boundary set of a free boundary area minimizing hypersurface in domains with dihedral angles everywhere not larger than $\pi/2$ \cite{LiEdelen2022}. 
\end{remark}

A second theme of the paper concerns the geometry of the first nontrivial Neumann eigenfunctions.  Rauch's hot-spots conjecture asserts that the extrema of such an eigenfunction occur on the boundary of the domain. Although the conjecture is false for general planar domains \cite{BW99}, it has been established under a variety of geometric assumptions; see, for example, \cite{BB99,AB04}. For Euclidean triangles, the conjecture was established by Judge and Mondal \cite{JudgeMondal2020, JudgeMondal2022Erratum}, and more recently Chen, Gui, and Yao \cite{ChenGuiYao2026} further obtained a precise description of the critical points and monotonicity of first nontrivial Neumann eigenfunctions on Euclidean triangles. 

In curved space, Hatcher \cite{Hatcher2025} developed an approach to hot spots on domains of constant curvature and proved that a first positive Neumann eigenfunction on a non-acute hyperbolic triangle has no non-vertex critical points and is strictly monotone along an appropriate Killing field. In \cite{Hatcher2025} it was observed that the argument would also extend to positive curvature provided one could establish the spherical analog of a certain comparison between the first positive Neumann eigenvalue and a mixed Dirichlet-Neumann eigenvalue.  We provide this bridge in Proposition \ref{Spherical Hatcher lemma 5.2}, resolving the precise positive-curvature obstruction identified in his Theorem 1.5.  With some modifications of the argument given in \cite{Hatcher2025}, we obtain Hot Spot Theorem for non-acute spherical triangles:

\begin{theorem}\label{sphericalHatcherTheorem}
Let $T \subset \Sph$ be a non-acute spherical geodesic triangle with $\diam (T)\leq\pi / 2$, not belonging to the exceptional birectangular family. Then:
\begin{itemize}
\item[1.] $\mu_1^N(T)$ is simple;
\item[2.] the first nonconstant Neumann eigenfunction $u$ has no non-vertex critical points and the global extremals are located at the vertices of the longest side;
\item[3.] if $e$ is the longest side, there exists a spherical Killing field $X_e$, tangent to $e$, such that, up to a sign change of $u$,
$$
X_e u>0 \quad \text { in } T .
$$
\end{itemize}
\end{theorem}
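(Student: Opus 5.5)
We follow the strategy of Hatcher's hyperbolic argument and transplant it to $\Sph$. The key input is the spherical eigenvalue comparison, Proposition~\ref{Spherical Hatcher lemma 5.2}. Let $T$ be non-acute, with obtuse or right angle at a vertex $p$, and let $e$ be the longest side, opposite $p$. We choose the Killing field $X_e$ to be the infinitesimal rotation of $\Sph$ about the axis through the poles of the great circle containing $e$. Then $X_e$ is tangent to $e$, and its flow translates along $e$.

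\emph{First step: a sign condition on the boundary.} The function $w=X_e u$ is again a Laplace eigenfunction with eigenvalue $\mu_1$, because $X_e$ is Killing and so commutes with $\Delta$.
\begin{itemize}
\item On $e$, the field $X_e$ is tangent to the boundary, so $w$ satisfies the Neumann condition there.
\item On the two other sides, which meet at the non-acute vertex $p$, we decompose $X_e$ into its tangential and normal components. The Neumann condition for $u$ then gives a relation between $w$ and the tangential derivative of $u$.
\item Following Hatcher, we study the nodal set of $w$. We show that if $w$ changes sign, then some nodal domain of $w$ carries mixed Dirichlet--Neumann data: Dirichlet on the nodal arcs, and Neumann or a sign-controlled condition on the rest.
\end{itemize}
Counting nodal domains is where the non-acuteness enters. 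The angle at $p$ being at least $\pi/2$ forces the rotation field to point into $T$, or be tangent, along the sides adjacent to $p$ with a controlled sign. This is the spherical analogue of Hatcher's analysis, and it uses $\diam T\le \pi/2$ so that the whole triangle lies in a hemisphere in which $X_e$ has no zeros.

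\emph{Second step: contradiction via the eigenvalue comparison.} If $w$ vanished somewhere nontrivially in the interior, its nodal domains would yield a test function for a mixed eigenvalue problem on $T$: Dirichlet on one side (or at a vertex region) and Neumann elsewhere. The variational characterization would then give $\mu_1^N(T)\ge \mu^{DN}(T)$. Proposition~\ref{Spherical Hatcher lemma 5.2} gives the strict reverse inequality $\mu_1^N(T)<\mu^{DN}(T)$ for non-acute spherical triangles of diameter at most $\pi/2$ outside the birectangular family. Hence $w$ has a sign, and after replacing $u$ by $-u$ we get $w\ge 0$. The strong maximum principle, applied to $\Delta w+\mu_1 w=0$ together with the Hopf lemma on the Neumann side $e$, upgrades this to $w>0$ in $T$. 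This proves item~3. The birectangular family is excluded because there equality can occur (compare the case $\mu_1=6$ in Theorem~\ref{The Main Theorem}).

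\emph{Third step: consequences.} Items~1 and~2 follow from item~3.
\begin{itemize}
\item \emph{Simplicity.} If $\mu_1$ had multiplicity at least two, we could take a combination $v$ of eigenfunctions for which $X_e v$ vanishes at an interior point. This contradicts the strict positivity just proved, which applies to every first eigenfunction.
\item \emph{No interior or edge critical points.} Since $X_e u>0$, there are no critical points in the interior of $T$ or in the relative interior of $e$.
\item \emph{Critical points on the other sides.} On the two sides adjacent to $p$, the Neumann condition plus a critical point would force $X_e u=0$ at a point where $X_e$ is transversal or nonzero. This contradicts $X_e u>0$ extended up to the boundary via Hopf.
\item \emph{Location of extrema.} Along the flow lines of $X_e$, $u$ is strictly monotone, so its extrema must sit at the endpoints of the flow-line family within $T$. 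These are the vertices of $e$.
\end{itemize}

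The main obstacle is the first step: showing, with spherical geometry, that the boundary conditions satisfied by $X_e u$ on the sides adjacent to $p$ really produce a valid mixed Dirichlet--Neumann test function. This requires tracking the sign of $\langle X_e,\nu\rangle$ and of the rotation term $\nabla_\nu X_e$. Unlike the flat or hyperbolic translation fields, these terms pick up curvature contributions. I would first check these signs explicitly in the hemisphere model, writing $X_e=\partial_\theta$ in polar coordinates about the pole of $e$.
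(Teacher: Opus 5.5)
\paragraph{The core gap: Steps 1--2.}
Proposition~\ref{Spherical Hatcher lemma 5.2} is only the non-strict inequality $\mu_1^N(T)\le\lambda_1(T;e)$. Its mixed problem has Neumann data on $e$ and Dirichlet data on \emph{both} other sides, and the paper notes, after Lotoreichik--Rohleder, that strictness fails in general.

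What the proposition actually gives is Proposition~\ref{sphericalLemma7.1}. Suppose a nodal domain $\Omega$ of $\phi=X_eu$ has $\partial\Omega\subset Z(\phi)\cup e$. Then $\phi|_\Omega$, extended by zero, is admissible for $\lambda_1(T;e)$ with Rayleigh quotient $\mu_1$. This forces equality and contradicts unique continuation. Hence $Z(X_eu)$ contains no loop and no arc with both endpoints on $e$. Non-strictness is harmless for this.

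Nothing, however, controls nodal arcs of $X_eu$ that end on the two sides adjacent to $p$. On such a side $\sigma$, the condition $\partial_\nu u=0$ kills the normal component of $X_e$, so $X_eu=\langle X_e,\tau\rangle\,\partial_\tau u$. Its sign is the sign of the tangential derivative of $u$ along $\sigma$. That is unknown a priori and is essentially the conclusion you are after. The sign of $\langle X_e,\nu\rangle$ that you hope to get from non-acuteness is irrelevant, because that component is annihilated. Moreover, $X_eu$ satisfies no closed boundary condition on $\sigma$. So a nodal domain of $X_eu$ touching $\sigma$ gives no admissible test function for $\lambda_1(T;e)$, or for any mixed problem you can compare with, and your Step~2 does not go through.

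The missing idea is a continuity argument. Following \cite{Hatcher2025}, the paper joins $T$ to the Euclidean triangle at $\kappa=0$ through the Klein-model family $T_\kappa$. It checks that every $T_\kappa$, rescaled to curvature $1$, is non-acute with diameter at most $\pi/2$. Consequently Proposition~\ref{Spherical Hatcher lemma 5.2}, the nodal lemma, and the multiplicity bound hold along the whole path. Simplicity and the sign and critical-point structure of $X_eu$ are then propagated from $\kappa=0$ by Hatcher's Sections~9, 10 and 12. Your simplicity argument is valid only once item~3 holds for every first eigenfunction, so it inherits this gap.

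\paragraph{Independent errors in Step 3.}
First, Hopf's lemma gives nothing on the two sides adjacent to $p$, since $X_eu$ is neither Dirichlet nor Neumann there. At a critical point $x$ of $u$ in the interior of such a side, having $X_eu(x)=0$ together with $X_eu>0$ in $T$ is perfectly consistent. Hopf only yields $\partial_\nu(X_eu)(x)<0$, which is no contradiction.

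Second, the flow lines of $X_e$ inside $T$ are the latitude slices $T_t$. Their endpoints lie on the two sides adjacent to $p$, not at the vertices of $e$. Monotonicity along $X_e$ therefore only places the maximum and the minimum on these two closed sides. Ruling out the side interiors and the vertex $p$ requires the separate critical-point analysis that the paper imports from Hatcher.
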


Using the Hot Spot Theorem, we are able to show the geometry of the first nontrivial Neumann eigenfunction. In particular, we extend the antisymmetry result in \cite{LaugesenSiudeja2010} and obtain a monotonicity result on $\mu_1$ for isosceles triangles of diameter $\pi/2:$

\begin{theorem}\label{Antisymmetry of isosceles triangles Theorem}
    Let $T\subset \Sph$ be a non-birectangular isosceles triangle with diameter $D =\pi/2.$ Then its first nonconstant Neumann eigenfunction $u$ is antisymmetric about its median. Moreover, as the apex vertex moves away from the midpoint of the base along the median, its first nonzero Neumann eigenvalue decreases.
\end{theorem}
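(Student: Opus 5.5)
We will prove both assertions of \cref{Antisymmetry of isosceles triangles Theorem} by combining the Hot Spot Theorem (\cref{sphericalHatcherTheorem}) with a reflection argument and Hadamard-type domain variation. Set $K=1$. Let $T$ have apex $C$ on the median $m$ through the midpoint $M$ of the base $AB$, and let $\sigma$ be reflection across the great circle containing $m$; then $\sigma(T)=T$. The first step is to show that $T$ lies in the scope of \cref{sphericalHatcherTheorem}. Since $\diam T=\pi/2$ and $T$ is not birectangular, we need $T$ to be non-acute. If the base is the longest side, then $|AB|=\pi/2\ge |AC|=|BC|$, and spherical trigonometry forces the apex angle to be $\ge\pi/2$. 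If instead the legs have length $\pi/2$, then the triangle is birectangular, which is excluded. So we expect $T$ to be non-acute, with $AB$ the unique longest side; I would verify this with the spherical law of cosines, $\cos|AB| = \cos^2|AC|+\sin^2|AC|\cos C$, which gives $\cos C\le 0$ once $\cos|AB|=0$. By \cref{sphericalHatcherTheorem}, $\mu_1$ is simple and its eigenfunction $u$ is extremal exactly at $A$ and $B$.

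\textbf{Antisymmetry.} Because $\mu_1$ is simple and $u\circ\sigma$ is again a first eigenfunction, we get $u\circ\sigma=\pm u$. Suppose $u\circ\sigma=u$. The global maximum and minimum are located at the vertices of the longest side, namely $A$ and $B$; since $u$ is nonconstant, $u(A)\neq u(B)$. But symmetry gives $u(A)=u(B)$, a contradiction. Hence $u\circ\sigma=-u$, and $u$ vanishes on $m$. Alternatively, part 3 of \cref{sphericalHatcherTheorem} gives a Killing field $X_e$ tangent to $AB$ that is $\sigma$-odd, and $X_eu>0$ is incompatible with $u$ being even.

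\textbf{Monotonicity.} Antisymmetry means that $\mu_1(T)$ equals the first mixed eigenvalue $\lambda(t)$ of the half-triangle $H_t=T\cap\{\text{one side of } m\}$, with Dirichlet condition on $m$ and Neumann conditions on $AM$ and $AC$. Here $t=|MC|$ parametrizes the apex position. The base $AM$ has fixed length $\pi/4$, and diameter $\pi/2$ is preserved as long as the legs stay below $\pi/2$, which happens until the triangle becomes birectangular. The key claim is that $\lambda(t)$ is strictly decreasing in $t$. To prove it, I would apply a Hadamard variation formula to moving the vertex $C$ along $m$. Concretely, choose the deformation generated by a vector field that fixes $A$ and $M$ and moves $C$ along $m$, acting on the Neumann side $AC$. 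The derivative is
\[
\lambda'(t)=\int_{AC}\bigl(|\nabla u|^2-\lambda u^2\bigr)\,V\cdot\nu \, ds .
\]
The contribution from the Dirichlet side vanishes, because that side simply gets longer along the same geodesic.

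The sign of this integrand is the main obstacle. An alternative that I would try first is a domain-transplantation argument. Map $H_{t'}$ onto $H_t$ for $t'>t$ by a map that is a "stretch" along the direction of $m$ (for instance, in polar coordinates centered at $A$, rescaling the angle). Then compare Rayleigh quotients using the monotonicity $X_eu>0$ from \cref{sphericalHatcherTheorem}, which controls the sign of the derivative of $u$ along the stretch direction. If a clean estimate is hard to obtain, fall back on the variation formula: part 3 of \cref{sphericalHatcherTheorem} gives the sign of the gradient component, and that sign should make $\lambda'(t)$ negative.
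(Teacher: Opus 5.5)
Your antisymmetry argument is correct and differs only slightly from the paper's. The paper rules out an even $u$ because the midpoint $M$ of the base would then be a critical point in the interior of an edge. You instead use that the two extrema sit at the endpoints of the base, or that $X_e$ is odd under $\sigma$. All three rely on \cref{sphericalHatcherTheorem} and all work.

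The gap is the monotonicity, which you leave open. Your Hadamard set-up is right. Take polar coordinates $(r,\theta)$ about the base vertex $A$, with $AM=\{\theta=0\}$ and the leg $AC=\{\theta=\beta\}$. Moving $C$ away from $M$ along the median increases $\beta$, and only the Neumann leg moves, with normal velocity $\sin s$. This gives
\[
\lambda'(\beta)=I:=\int_0^{\ell}\bigl(y'(s)^2-\lambda y(s)^2\bigr)\sin s\,ds,\qquad y(s)=v(s,\beta),
\]
where $v>0$ is the first mixed eigenfunction on the half-triangle. But $I$ is a difference of two positive terms. Knowing the sign of a single first-order derivative, such as $X_eu>0$ (rotation about the pole of the base), gives no control over it. The transplantation idea also fails as stated: rescaling the polar angle about $A$ does not map the fixed median curve to itself.

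What is missing is second-order information on the moving leg. It comes from the Killing field $\partial_\theta$ of rotation about $A$, not from $X_e$.
\begin{itemize}
\item The function $q=\partial_\theta v$ satisfies $-\Delta q=\lambda q$.
\item $q$ vanishes on $AM$ and $AC$, because these are radial rays from $A$ carrying the Neumann condition.
\item The median is $r=\rho(\theta)$ with $\cot\rho=\cos\theta$. Differentiating $v(\rho(\theta),\theta)=0$ gives $q=-\rho'(\theta)\,v_r$ there. This is positive by Hopf ($v_r<0$) and $\rho'=\sin^2\!\rho\,\sin\theta>0$.
\item $\lambda$ lies strictly below the pure Dirichlet eigenvalue of the half-triangle. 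Hence $q^-\in H^1_0$ must vanish, and $q>0$ inside.
\item Hopf on $\{\theta=\beta\}$ then yields $v_{\theta\theta}<0$ along $AC$.
\end{itemize}
Restricting the equation to the leg gives
\[
y''+\cot s\,y'+\lambda y=-\frac{v_{\theta\theta}}{\sin^2 s}.
\]
One integration by parts turns $I$ into
\[
I=\int_0^\ell \frac{y\,v_{\theta\theta}}{\sin s}\,ds<0 .
\]
The boundary terms vanish because $\sin 0=0$ and $y(\ell)=v(C)=0$, the apex lying on the Dirichlet median.

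Without this step, your proposal establishes antisymmetry but not the decrease of the eigenvalue.
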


\subsection*{Organization of the paper}
In \S \ref{sec: prelim}, we collect some basic facts used for proving \cref{The Main Theorem}, including elementary geometric properties of geodesic triangles in two-dimensional space forms and the local and global structure of nodal sets of Neumann eigenfunctions. In \S \ref{sec:space-form-diameter-bound}, we prove the sharp diameter-dependent lower bound for geodesic triangles in two-dimensional space forms. The proof combines a radial Neumann comparison function with the nodal topology of a first Neumann eigenfunction. We also establish sharpness by a degeneration to thin isosceles triangles. We then give an estimate on the lower bound in terms of curvature and diameter. In \S \ref{section:Spherical Hatcher Theorem} we prove the spherical mixed-eigenvalue comparison and deduce the non-acute spherical hot-spots theorem. Finally, \S \ref{Section: lower bound isoceles} treats the isosceles case, where we determine the symmetry of the first Neumann eigenfunction and study the variation and monotonicity of the corresponding eigenvalue. 

\subsection*{Acknowledgments} The second author thanks Prof. Mark Ashbaugh for sending the work of  Harvey Walden in the 1970s on the numerical algorithem computing the first Dirichlet eigenvalue of domains on  the sphere $\mathbb S^2$  in May 2026. Although we do not use these papers, it leads the authors to come back to this project which we explored a few years ago but stopped. The second author also thanks Chao Li for sharing that Neumann eigenvalues estimates of triangles in sphere is related to the  regularity of free boundary minimal surfaces. 

\subsection*{AI disclosure statement}The authors used OpenAI's ChatGPT as an assistive tool in the mathematical development and preparation of this manuscript. In particular, ChatGPT helped to formulate the comparison model and the proof strategy for the main eigenvalue estimate. It was also used to explore auxiliary arguments, pre-check calculations, and improve the exposition. All AI-assisted suggestions were independently checked, developed, and verified by the authors, who take full responsibility for the content of the paper.

\section{ Notations and Basic Facts}\label{sec: prelim}
Let $ \MM_K^2$ denote the complete simply connected two-dimensional
space form of constant sectional curvature $K\in\mathbb R$.
Introduce the standard functions
\begin{equation}  \label{eqn:sn_k}
\sn_K(r)
=
\begin{cases}
\dfrac{1}{\sqrt K}\sin(\sqrt K\,r), & K>0,\\[2mm]
r, & K=0,\\[2mm]
\dfrac{1}{\sqrt{-K}}\sinh(\sqrt{-K}\,r), & K<0,
\end{cases}
\end{equation}
and
\[
\cs_K(r):=\sn_K'(r)
=
\begin{cases}
\cos(\sqrt K\,r), & K>0,\\
1, & K=0,\\
\cosh(\sqrt{-K}\,r), & K<0.
\end{cases}
\]
Thus
\[
\sn_K''+K\sn_K=0,
\qquad
\cs_K^2+K\sn_K^2=1.
\]

In geodesic polar coordinates centered at a point of $\MM_K^2$,
\[
g=dr^2+\sn_K^2(r)\,d\theta^2,
\]
and
\[
\Delta
=
\partial_{rr}
+
\frac{\cs_K(r)}{\sn_K(r)}\partial_r
+
\frac{1}{\sn_K^2(r)}\partial_{\theta\theta}.
\]

For $K>0$ we restrict throughout to the hemisphere
\[
0<D\leq\frac{\pi}{2\sqrt K},
\]
while for $K\leq0$ we allow arbitrary $D>0$.

\subsection{The geometry of geodesic triangles}

We next record the geometric ingredients that will be used later in the proof.

\begin{lemma}
\label{lem:space-form-opposite-side-distance}
Let
\[
T=\triangle ABC\subset \MM_K^2
\]
be a nondegenerate geodesic triangle with $
\diam(T)=D. $
Assume either
\[
K\leq0, \qquad \mbox{or} \qquad 
K>0, \ 
D\leq\frac{\pi}{2\sqrt K}.
\]
Then for every $X\in BC^\circ,$
one has
\[
d(A,X)<D,
\]
except possibly when
\[
K>0,
\qquad
D=\frac{\pi}{2\sqrt K},
\qquad
AB=AC=D.
\]
In this exceptional case
\[
d(A,X)=D
\qquad
\text{for every }X\in BC.
\]
\end{lemma}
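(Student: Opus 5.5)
Parametrize the side $BC$ by arclength: let $\gamma:[0,a]\to\MM_K^2$ be the unit-speed geodesic with $\gamma(0)=B$, $\gamma(a)=C$, where $a=BC\le D$. Set $f(t):=d(A,\gamma(t))$. Since $AB\le D$ and $AC\le D$, the claim reduces to showing that $f$ cannot reach the value $D$ at an interior point unless we are in the stated exceptional case. I will study $h(t):=\cs_K(f(t))$ when $K\ne0$, and $h(t)=f(t)^2/2$ when $K=0$. The comparison (Jacobi) equation along geodesics in a space form gives the standard identity
\[
h''+Kh=1 \quad (K\neq 0,\ \text{suitably normalized}),\qquad h''=1\quad (K=0).
\]
More concretely, for $K>0$ the function $\cos(\sqrt K f(t))$ is of the form $\alpha\cos(\sqrt K t)+\beta\sin(\sqrt K t)$. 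This follows from the spherical law of cosines, or from the embedding $\Sph\subset\RR^3$, where $\cos(\sqrt K f)$ is a linear functional evaluated on a great circle. For $K<0$ the analogue is $\alpha\cosh+\beta\sinh$, via the hyperboloid model. For $K=0$, $f^2$ is a convex quadratic in $t$. Geodesic segments of length $\le D\le \pi/(2\sqrt K)<\pi/\sqrt K$ are unique and minimizing, so $d(A,\cdot)$ is computed by these formulas.

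\textbf{Cases $K\le0$.} The function $\cosh(\sqrt{-K}f)=\alpha\cosh(\sqrt{-K}t)+\beta\sinh(\sqrt{-K}t)$ satisfies $h''=-Kh>0$ since $h\ge1$. Hence $h$ is strictly convex, and likewise $f^2$ when $K=0$. A strictly convex function on $[0,a]$ attains its maximum only at endpoints, so for interior $t$ we get $h(t)<\max(h(0),h(a))\le \cs_K(D)$. Since $\cs_K$ is increasing for $K<0$, this gives $f(t)<D$. This settles the non-exceptional claim for $K\le0$.

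\textbf{Case $K>0$.} Normalize $K=1$. Here $h=\cos f=\alpha\cos t+\beta\sin t$ with $h(0)=\cos c\ge\cos D\ge0$ and $h(a)=\cos b\ge\cos D\ge0$. The key point is that $h''=-h\le0$ wherever $h\ge0$, so $h$ is concave wherever it is nonnegative. I claim $h\ge0$ on all of $[0,a]$. Indeed, $h$ is a sinusoid of half-period $\pi$, while $a\le\pi/2<\pi$, so $h$ cannot go negative and return to nonnegative values within $[0,a]$; a sign change pattern $+,-,+$ would need two zeros at distance $\pi$ apart. Hence $h\ge0$ on $[0,a]$, so $h$ is concave there, and therefore
\[
h(t)\ge \min(h(0),h(a))\ge\cos D .
\]
This gives $f(t)\le D$ everywhere.

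For strictness, concavity is strict wherever $h>0$. If $\cos D>0$, that is $D<\pi/2$, then $h>0$ throughout, and strict concavity gives $h(t)>\cos D$ at interior points, so $f(t)<D$. If $D=\pi/2$, equality $h(t_0)=0$ at an interior $t_0$ means $h$, which is $\ge0$ on $[0,a]$, attains an interior minimum $0$. Then $h(t_0)=h'(t_0)=0$, and since $h$ solves $h''+h=0$, uniqueness forces $h\equiv0$. This yields $h(0)=h(a)=0$, i.e.\ $AB=AC=\pi/2=D$ and $d(A,X)=D$ on all of $BC$: exactly the exceptional case. Conversely, in that case $A$ is a pole of the great circle containing $BC$, so the distance is constantly $\pi/2$.

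The main obstacle is the positive-curvature sign argument: ensuring that $h$ stays nonnegative on the side so that concavity applies. I expect to handle it via the sinusoid half-period bound $a\le D\le\pi/2$ as above. An alternative route is the observation that $d(A,\cdot)\le\pi/2$ defines a closed hemisphere, which is geodesically convex for short segments and contains $B$ and $C$.
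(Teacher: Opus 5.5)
Your proof is correct, and it rests on the same underlying fact as the paper's: along the unit-speed side $\gamma$, the function $h=\cs_K(d(A,\gamma(t)))$ solves $h''+Kh=0$. One small slip: for $h=\cs_K\circ f$ the identity is $h''+Kh=0$, not $h''+Kh=1$; the latter holds for $(1-\cs_K(f))/K$. Your concrete sinusoid and $\cosh/\sinh$ formulas use the correct identity, so nothing breaks.

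The difference lies in how you extract the conclusion. The paper writes the solution explicitly as the two-point interpolation
\[
\cs_K(d(A,X))=\frac{\sn_K(b-s)}{\sn_K(b)}\cs_K(AB)+\frac{\sn_K(s)}{\sn_K(b)}\cs_K(AC).
\]
Its weights are positive and sum to more than $1$ when $K>0$, and to less than $1$ when $K<0$. This buys three things at once:
\begin{enumerate}
\item Comparing with $\cs_K(D)$ is a one-line estimate.
\item Nonnegativity of $h$ on the side is automatic, since $h$ is a positive combination of the nonnegative endpoint values.
\item The rigidity case is read off directly: the right-hand side vanishes only when $\cs_K(AB)=\cs_K(AC)=0$.
\end{enumerate}

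You argue qualitatively instead. For $K\le 0$ you use strict convexity of $h$ (or of $f^2$). For $K>0$ you use a half-period argument to get $h\ge0$, then concavity, and then ODE uniqueness at an interior zero-minimum for the equality case. Your route avoids computing the explicit solution and the inequality $\sn_K(b-s)+\sn_K(s)\gtrless\sn_K(b)$, and it treats $K=0$ in the same convexity framework. The price is the extra sign step for $K>0$. You correctly flagged that step as the main obstacle, and the paper's explicit formula makes it trivial.
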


\begin{proof}
We treat the three curvature signs separately.

Suppose first that $K=0$.  Since the Euclidean norm is strictly
convex along a segment not containing $A$,
\[
|A-X|
<
\max\{|A-B|,|A-C|\}
\leq D,
\qquad
X\in BC^\circ.
\]

Now suppose $K\neq0$.  Write
\[
b=d(B,C),
\qquad
s=d(B,X),
\qquad
0<s<b.
\]
The standard interpolation formula along the geodesic segment
$BC$ gives
\begin{equation}
\label{eq:space-form-geodesic-interpolation}
\cs_K(d(A,X))
=
\frac{\sn_K(b-s)}{\sn_K(b)}\cs_K(AB)
+
\frac{\sn_K(s)}{\sn_K(b)}\cs_K(AC).
\end{equation}

Suppose $K>0$.  Since
\[
AB,AC\leq D\leq\frac{\pi}{2\sqrt K},
\]
the function $\cs_K$ is nonnegative and decreasing, we have $
\cs_K(AB),\ \cs_K(AC)\geq\cs_K(D).$
Moreover,
\[
\sn_K(b-s)+\sn_K(s)>\sn_K(b)
\]
for $0<s<b<\pi/\sqrt K$.

If $
D<\frac{\pi}{2\sqrt K},$
then $\cs_K(D)>0$, and hence \cref{eq:space-form-geodesic-interpolation} gives
\[
\cs_K(d(A,X))
>
\cs_K(D).
\]
Since $\cs_K$ is strictly decreasing on
$[0,\pi/\sqrt K]$, we conclude that
\[
d(A,X)<D.
\]

If $
D=\frac{\pi}{2\sqrt K}, $
then $
\cs_K(D)=0.$
The right-hand side of
\cref{eq:space-form-geodesic-interpolation} is strictly positive
unless
\[
\cs_K(AB)=\cs_K(AC)=0,
\]
that is,
\[
AB=AC=D.
\]
In the exceptional case the right-hand side vanishes identically,
so $
d(A,X)=D
$
for every $X\in BC$.

Finally, suppose $K<0$.  Then $\cs_K$ is strictly increasing and
\[
\cs_K(AB),\cs_K(AC)\leq\cs_K(D).
\]
Furthermore,
\[
\sn_K(b-s)+\sn_K(s)<\sn_K(b),
\qquad
0<s<b.
\]

Thus
\[
\cs_K(d(A,X))
<
\cs_K(D),
\]
and strict monotonicity of $\cs_K$ gives
\[
d(A,X)<D.
\]
\end{proof}

Note that the above lemma is not true anymore when $K>0, \ D > \tfrac{\pi}{2\sqrt{K}}$. 

The following is another simple fact about the distance function from a vertex of a triangle. 
\begin{lemma}
\label{lem:space-form-outward-radial}
Let $A$ be a vertex of a nondegenerate geodesic triangle
$T=\triangle ABC$, and let
\[
r(X)=d(A,X).
\]
Then on the open opposite side $BC^\circ$,
\[
\partial_\nu r>0,
\]
where $\nu$ denotes the outward unit conormal of $T$.
\end{lemma}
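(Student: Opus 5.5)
The plan is to reduce the claim to a statement about the angle between two unit vectors at a point $X\in BC^\circ$: the outward conormal $\nu$ and the gradient $\nabla r$. Away from $A$ the distance function $r=d(A,\cdot)$ is smooth. This holds for every $X\in BC^\circ$ since $X\neq A$. When $K>0$ we also need $d(A,X)<\pi/\sqrt K$, which follows because a nondegenerate geodesic triangle lies in an open hemisphere, or simply from the standing assumption $D\le \pi/(2\sqrt K)$. At such $X$, $\nabla r(X)$ is the unit tangent at $X$ to the minimizing geodesic $\gamma$ from $A$ to $X$, pointing away from $A$. Hence
\[
\partial_\nu r(X)=\langle \nabla r(X),\nu(X)\rangle ,
\]
and it suffices to show that this inner product is positive.

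The first step is to check that $\gamma$ runs inside $T$. Geodesic triangles in $\MM^2_K$ are convex, in the hemisphere setting when $K>0$. So the segment $AX$ lies in $T$, and apart from its endpoints it lies in the interior $T^\circ$: if it touched $\partial T$ elsewhere, it would have to coincide with a side, contradicting nondegeneracy. Near $X$, therefore, $\gamma$ approaches $X$ from inside $T$, and $-\nabla r(X)=-\gamma'(\ell)$ points into $T$. Because $BC$ is a smooth geodesic arc near $X$, the half-plane of inward directions at $X$ is $\{v:\langle v,\nu\rangle<0\}\cup T_XBC$. This gives $\langle \nabla r,\nu\rangle\ge 0$.

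It remains to exclude $\langle \nabla r,\nu\rangle=0$. Equality would make $\gamma'(\ell)$ tangent to $BC$ at $X$. By uniqueness of geodesics, $\gamma$ would then lie along the great circle, or geodesic line, through $B$ and $C$, and so $A$ would lie on that line. This contradicts nondegeneracy. Equivalently, the two angles that $AX$ makes with $XB$ and $XC$ both lie in $(0,\pi)$, and $\partial_\nu r=\sin\angle AXB>0$.

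A cleaner concrete route is a first-variation computation: $\partial_\nu r(X)=\cos\theta$, where $\theta$ is the angle between $\gamma'(\ell)$ and $\nu$. Since $\nu\perp BC$, this equals $\sin\angle AXC$. The only points requiring care, which I expect to be the main (mild) obstacle, are justifying smoothness of $r$ at $X$ (no cut point) and the convexity that makes $AX$ interior. For $K\le0$ both are automatic. For $K>0$ both follow because all points of $T$ are within distance $D\le\pi/(2\sqrt K)<\pi/\sqrt K$ of $A$, and a triangle with sides less than $\pi/\sqrt K$ in a hemisphere is convex.
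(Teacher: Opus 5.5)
Your proof is correct and takes essentially the same route as the paper: you identify $\nabla r(X)$ with the forward unit tangent of the minimizing geodesic from $A$, and you use that $A$ lies strictly on the inward side of the geodesic through $BC$, so this geodesic reaches $X$ transversally from inside $T$. Where the paper compresses this into one line, you spell out the smoothness of $r$ at $X$, the weak inequality from convexity, and strictness via geodesic uniqueness, arriving at the explicit value $\partial_\nu r=\sin\angle AXC$.
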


\begin{proof}
Fix $X\in BC^\circ$.  The minimizing geodesic from $X$ to $A$
enters the interior of $T$, since $A$ lies strictly on the inward
side of the geodesic containing $BC$.  Its initial velocity at $X$
therefore has a strictly positive inward conormal component.

On the other hand, $-\nabla r$ is precisely the unit tangent at $X$
pointing along the minimizing geodesic from $X$ toward $A$.
Therefore $-\nabla r$ has positive inward conormal component, or
equivalently,
\[
\langle\nabla r,\nu\rangle>0.
\]
Thus $
\partial_\nu r>0. $
\end{proof}

\subsection{Topology of the nodal set}

We shall use the following elementary nodal-topology fact.

\begin{lemma}
\label{lem:space-form-three-side-sign}
Let $u$ be a nontrivial Neumann eigenfunction on a geodesic triangle
$T\subset \MM_K^2$.  If $u$ changes sign on the relative interior of
each of the three sides, then $u$ has at least three nodal domains.
\end{lemma}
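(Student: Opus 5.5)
We argue by contradiction, using planar topology of the nodal set $\mathcal Z=\{u=0\}$ in the closed triangle. Suppose $u$ has at most two nodal domains. Since $u$ is nonconstant with $\int_T u=0$ (or at least changes sign, which it must do on each side by hypothesis), it has at least one positive and one negative nodal domain. So $T\setminus\mathcal Z$ consists of exactly two components: $\Omega^+$ where $u>0$ and $\Omega^-$ where $u<0$.

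The first step is to record the structure of $\mathcal Z$ that we need. Away from the vertices, $u$ is real analytic up to the open sides: reflect evenly across each side, since the sides are geodesics and reflection is an isometry of $\MM_K^2$. Hence near any interior or side point $\mathcal Z$ is a finite union of analytic arcs meeting at equal angles. In particular, near a boundary zero where $u$ changes sign, a nodal arc leaves the boundary into the interior. Near the vertices, one can use separation of variables in the sector, or simply avoid the vertices, since only sign changes on open sides are used.

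The second step is the combinatorial core. On the relative interior of side $AB$, pick points $p_1,q_1$ with $u(p_1)>0>u(q_1)$, and similarly $p_2,q_2$ on $BC$ and $p_3,q_3$ on $CA$. Going around $\partial T$ we meet six points; we can choose them so the signs alternate at least as $+,-$ within each side. By connectedness of $\Omega^+$, join $p_1,p_2,p_3$ by paths in $\Omega^+$; these paths may be taken to lie in the interior except at their endpoints. Likewise join $q_1,q_2,q_3$ by paths in $\Omega^-$. Thus $\Omega^+$ contains a connected set touching $\partial T$ at three points, and $\Omega^-$ one touching at three points, and the two sets are disjoint. Along the boundary circle the positive points and negative points interleave in the cyclic pattern $p_1,q_1,p_2,q_2,p_3,q_3$, possibly after relabeling the order within each side: on side $AB$ choose the order of $p_1,q_1$ consistently with the cyclic orientation. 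If within a side the orientation is reversed, we replace the side's sign-change pair by a pair in the other order; any sign change gives points in both orders. The Jordan curve theorem in the disk $T$ then applies. A path in $\Omega^+$ from $p_1$ to $p_2$ separates $T$ into two parts, one containing $q_1$ on its boundary arc and the other containing $q_2$, with $q_3$ also on one side. But $q_1$ and $q_2$ must be connected inside $\Omega^-$, which is disjoint from that path. This is a contradiction, so there are at least three nodal domains.

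I expect the main obstacle to be making the interleaving rigorous. One has to ensure that points on each side can be chosen in the cyclic order $+,-$ with respect to a fixed orientation; this is immediate since on each open side both signs occur. One also has to ensure that the connecting paths stay in the open triangle so that Jordan separation applies, which holds because nodal domains are open in $T$ and a small inward push keeps points off $\mathcal Z$. With two alternations, i.e. four points $p_1,q_1,p_2,q_2$, the crossing argument already suffices, so the third side is only needed to guarantee that alternation.
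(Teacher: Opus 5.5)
There is a genuine gap, at the interleaving step. You assert that on a side where $u$ changes sign you can choose the positive and negative points in either order along a fixed orientation of $\partial T$ (``any sign change gives points in both orders''). This is false. If $u$ has a single sign change on $AB^\circ$, say negative near $A$ and positive near $B$, then no point with $u>0$ precedes a point with $u<0$ as you move from $A$ to $B$.

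The error is not cosmetic. With your rule, $p_1,q_1\in AB^\circ$ and $p_2,q_2\in BC^\circ$, both taken in the order $+,-$, already form the alternating quadruple $p_1,q_1,p_2,q_2$. So your argument never uses the third side, and it would show that an eigenfunction changing sign on just two open sides has at least three nodal domains. That is false. Take an eigenfunction whose nodal set is a single arc joining $AB^\circ$ to $CA^\circ$, as for the first Neumann eigenfunction of a thin isosceles triangle $T_{D,\alpha}$ with apex $A$. It has exactly two nodal domains, and on one of the two sides the signs occur in the ``wrong'' order.

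The missing ingredient is a parity count, and this is exactly where the third side is needed. Pick on each open side one point with $u>0$ and one with $u<0$. List the six points in cyclic order around $\partial T$. The two points from the same side are adjacent, so the cyclic sign sequence has at least three sign changes. A cyclic $\pm$ sequence has an even number of sign changes, so there are at least four. Hence there are at least four maximal runs of alternating sign, and one point from each of four consecutive runs gives $p_1,q_1,p_2,q_2$ alternating around $\partial T$.

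From there your cross-cut argument is correct. A simple arc from $p_1$ to $p_2$ through $\{u>0\}$, with interior in the open triangle, separates $q_1$ from $q_2$ in $\overline{T}$. This contradicts connectedness of $\{u<0\}$.

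This parity step plays the same role as the integrality of $N(u)$ in the paper's proof, where Euler's formula for the nodal graph gives $N(u)\ge 1+\tfrac32$, rounded up to $3$. Once repaired, your route is more elementary than the paper's. It uses only the connectedness of $\{u>0\}$ and $\{u<0\}$ and the separation of a disk by a cross-cut. It needs neither Cheng's finite-graph structure nor the degree count at boundary vertices, so your first step on the local structure of the nodal set can be dropped.
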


\begin{proof}
The nodal set is a finite embedded graph in the closed topological
disk $\overline T$ \cite[Theorem~2.5]{Cheng1976}.  At an interior zero of $u$, at least four
nodal arcs meet, while at a boundary zero at least one nodal arc
meets the boundary.

Let $V_{\mathrm{int}}$ and $V_\partial$ denote the interior and
boundary vertices of the nodal graph, respectively. Denote $N(u)$ the number of nodal domains of $u$. If a component of the nodal set is a simple closed curve containing no vertex, choose an arbitrary point on it and regard that point as a degree-two vertex. Euler's formula then gives,
\[
N(u)
=
1+c_0
+
\frac12
\sum_{v\in V_{\mathrm{int}}}(d(v)-2)
+
\frac12
\sum_{v\in V_\partial}d(v),
\]
where $c_0\geq0$ is the number of components of the nodal graph that
do not meet the boundary.

Since $u$ changes sign on each open side, the nodal set meets each
of the three sides.  Hence
\[
\#V_\partial\geq3
\]
and therefore
\[
\sum_{v\in V_\partial}d(v)\geq3.
\]
All other terms are nonnegative, so
\[
N(u)\geq1+\frac32.
\]
Since $N(u)$ is an integer,
\[
N(u)\geq3.
\]
\end{proof}

It is a standard fact that the first nonconstant Neumann eigenfunction changes sign exactly once.
\begin{lemma}
\label{lem:space-form-first-Neumann-two-domains}
Every eigenfunction corresponding to the first nonzero Neumann
eigenvalue of a geodesic triangle has exactly two nodal domains.
\end{lemma}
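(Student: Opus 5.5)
The claim has two parts. An eigenfunction for $\mu_1$ has at least two nodal domains, and it has at most two. I would prove each in turn.

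\textbf{At least two.} Let $u$ be an eigenfunction for $\mu_1>0$. Integrating $\Delta u+\mu_1 u=0$ over $T$ and using the Neumann condition gives $\int_T u\,dV=0$. The boundary integral vanishes because $\partial T$ is piecewise smooth and $u\in H^1$ with $\nabla u$ integrable up to the boundary; near the corners this uses the standard regularity of Neumann eigenfunctions at convex corners. Since $u\not\equiv0$ has zero mean, it takes both signs, so there are at least two nodal domains.

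\textbf{At most two.} I would use the Courant-type argument in the Neumann setting. Suppose $u$ had nodal domains $\Omega_1,\Omega_2,\Omega_3,\dots$. For each $i$, set $u_i=u\chi_{\Omega_i}$. Each $u_i$ lies in $H^1(T)$, since $u$ is continuous on $\overline T$ and vanishes on $\partial\Omega_i\cap T$. Testing the weak eigen-equation against $u_i$ gives $\int_T|\nabla u_i|^2=\mu_1\int_T u_i^2$; the relevant boundary terms vanish either by the Neumann condition or because $u=0$ on the interior nodal boundary.

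Now take $w=a_1u_1+a_2u_2$, with $(a_1,a_2)\neq0$ chosen so that $\int_T w=0$. The $u_i$ have disjoint supports, so the Rayleigh quotient of $w$ equals $\mu_1$. Hence $w$ is a minimizer, and therefore a weak, hence analytic, eigenfunction with eigenvalue $\mu_1$. But $w$ vanishes identically on the open set $\Omega_3$. By unique continuation for $\Delta+\mu_1$ on the connected interior of $T$, $w\equiv0$, contradicting $(a_1,a_2)\neq0$. This is exactly Courant's bound $N(u)\le 2$ for the second Neumann eigenfunction $\mu_1$.

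\textbf{Main obstacle.} The step needing most care is justifying that $u_i\in H^1(T)$ and the integration-by-parts identity $\int|\nabla u_i|^2=\mu_1\int u_i^2$ on a domain with corners. Nodal domains can touch the boundary and the vertices, and $\partial\Omega_i$ need not be smooth.

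To handle this, I would avoid integrating by parts on $\Omega_i$ directly. Instead I would use the truncations $(u-\varepsilon)^+\chi_{\Omega_i}$, which are Lipschitz multiples of $u$. Testing the weak Neumann formulation $\int_T\nabla u\cdot\nabla\varphi=\mu_1\int_T u\varphi$, valid for all $\varphi\in H^1(T)$, with these truncations and letting $\varepsilon\to0$ gives the identity. This needs only $u\in H^1(T)\cap C(\overline T)$, which holds for Neumann eigenfunctions on convex polygons, including geodesic triangles in $\MM_K^2$, by elliptic regularity.
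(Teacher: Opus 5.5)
Your proposal is correct. It is the standard Courant nodal-domain argument adapted to the Neumann problem: orthogonality to constants gives at least two domains, and disjointly supported restrictions plus unique continuation give at most two. The paper invokes exactly this as a ``standard fact'' and gives no proof.

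Two small points. The zero-mean step is simpler if you test the weak form with $\varphi\equiv1$. In the truncation step, state explicitly that $\chi_{\Omega_i}$ is locally constant near $\{u\ge\varepsilon\}\cap\overline{\Omega_i}$; this holds because a small ball about any boundary point of $T$ meets $T$ in a connected set.
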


\section{A sharp eigenvalue lower bound for triangles in space forms}
\label{sec:space-form-diameter-bound}

\subsection{The radial Neumann model}

For an admissible $D$, let $\nu_K(D)$ denote the first positive
radial Neumann eigenvalue of the geodesic ball of radius $D$ in
$\MM_K^2$.  Equivalently, $\nu_K(D)$ is the first positive eigenvalue
of
\begin{equation}
\label{eq:space-form-radial-Neumann}
-(\sn_K(r)J'(r))'
=
\nu_K(D)\sn_K(r)J(r),
\qquad
0<r<D,
\end{equation}
with
\[
J'(0)=J'(D)=0.
\]
Equivalently,
\begin{equation}
\label{eq:space-form-radial-Rayleigh}
\nu_K(D)
=
\min_{\substack{
f\in H^1((0,D),\sn_K(r)\,dr)\\
\int_0^D f(r)\sn_K(r)\,dr=0}}
\frac{
\displaystyle\int_0^D |f'(r)|^2\sn_K(r)\,dr
}{
\displaystyle\int_0^D f(r)^2\sn_K(r)\,dr
}.
\end{equation}

For $\lambda>0$, let $J_\lambda$ be the solution of
\begin{equation}
\label{eq:space-form-Jlambda}
J_\lambda''
+
\frac{\cs_K}{\sn_K}J_\lambda'
+
\lambda J_\lambda
=
0,
\qquad
J_\lambda(0)=1,
\qquad
J_\lambda'(0)=0.
\end{equation}

\begin{lemma}[Radial monotonicity]
\label{lem:space-form-radial-monotonicity}
Let $D$ be admissible as above.

If $\lambda=\nu_K(D)$, then
\[
J_\lambda'(r)<0,
\qquad
0<r<D,
\]
and
\[
J_\lambda'(D)=0.
\]
If
\[
0<\lambda<\nu_K(D),
\]
then
\[
J_\lambda'(r)<0,
\qquad
0<r\leq D.
\]
\end{lemma}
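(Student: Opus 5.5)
We work directly with the Sturm--Liouville form $(\sn_K J_\lambda')' = -\lambda \sn_K J_\lambda$ and its integrated version
\[
\sn_K(r)J_\lambda'(r) = -\lambda\int_0^r \sn_K(s)J_\lambda(s)\,ds .
\]
Since $J_\lambda(0)=1$, this shows $J_\lambda'<0$ on some interval $(0,\epsilon)$. Let $z(\lambda)\in(0,\infty]$ be the first positive zero of $J_\lambda'$, restricted to the admissible range: for $K>0$ the equation is regular on $(0,\pi/\sqrt K)$, which contains $(0,D]$. On $(0,z(\lambda))$ the function $J_\lambda$ is strictly decreasing.

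\textbf{Step 1: $z(\nu_K(D))=D$.} The first positive radial Neumann eigenfunction $J$ satisfies $J'(0)=0$. By uniqueness of the regular solution at the singular point $r=0$ (the other solution behaves like $\log r$), $J$ is a multiple of $J_{\nu}$ with $\nu=\nu_K(D)$, so $J_\nu'(D)=0$. Hence $z(\nu)\le D$.

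Suppose $z(\nu)=d<D$. Then $J_\nu$ restricted to $(0,d)$ is a Neumann eigenfunction on $(0,d)$, and the Sturm--Liouville oscillation count shows that $J_\nu$ on $(0,D)$ is at least the second nonconstant radial Neumann eigenfunction. The cleanest route is Sturm comparison. The $k$-th Neumann eigenfunction has $J'$ with exactly $k-1$ interior zeros, because $J'$ satisfies its own Sturm--Liouville equation with Dirichlet conditions. Equivalently, the first eigenfunction is monotone: $J'$ is the first eigenfunction of the associated Dirichlet problem for $w=\sn_K J'$, namely
\[
\left(\frac{w'}{\sn_K}\right)'\sn_K = -\nu w, \qquad w(0)=w(D)=0,
\]
and a first Dirichlet eigenfunction has no interior zeros. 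So we obtain $J_\nu'<0$ on $(0,D)$, with the sign fixed by the behavior near $0$.

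\textbf{Step 2: $0<\lambda<\nu$.} We must show $z(\lambda)>D$. Suppose instead that $z(\lambda)=d\le D$. Then $w_\lambda=\sn_K J_\lambda'$ satisfies the Dirichlet problem above on $(0,d)$ with eigenvalue $\lambda$ and has no interior zero, so $\lambda$ is the first Dirichlet eigenvalue of this operator on $(0,d)$. By Dirichlet domain monotonicity,
\[
\lambda = \lambda_1^{\mathrm{Dir}}(0,d) \ge \lambda_1^{\mathrm{Dir}}(0,D) = \nu,
\]
which is a contradiction. The Rayleigh quotient for this Dirichlet problem is $\int (w')^2/\sn_K \big/ \int w^2/\sn_K$, so monotonicity follows by extending by zero.

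Alternatively, one can argue by a Sturm comparison on $w$ (Picone identity) between $\lambda$ and $\nu$: $w_\nu$ must vanish strictly inside $(0,d)$, which contradicts Step 1.

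\textbf{Main obstacle.} The delicate step is rigorously identifying $\nu_K(D)$, defined variationally in \eqref{eq:space-form-radial-Rayleigh}, with the first Dirichlet eigenvalue of the $w$-problem, while handling the singular endpoint $r=0$. This requires checking that $w=\sn_K J'$ vanishes at $0$ to a sufficient order (it behaves like $-\lambda r^2/2$) and that the Dirichlet Rayleigh quotient is finite. The correspondence itself is the standard intertwining $J\mapsto \sn_K J'$: it maps nonconstant Neumann eigenfunctions bijectively to Dirichlet eigenfunctions with the same eigenvalue, with inverse $w\mapsto -\frac{1}{\nu}\frac{1}{\sn_K}\,w'$ up to normalization.
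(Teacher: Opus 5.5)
Your argument is correct, but it takes a different route from the paper.

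\textbf{The paper's route.} The paper stays with pointwise ODE comparison throughout.
\begin{itemize}
\item For $\lambda=\nu_K(D)$, it uses Sturm oscillation to get a single zero $r_0\in(0,D)$ of $J_\nu$. It then reads off $J_\nu'<0$ from the monotonicity of $\sn_K J_\nu'$: decreasing from $0$ on $(0,r_0]$, and increasing up to $0$ at $r=D$ on $[r_0,D)$.
\item For $0<\lambda<\nu$, it differentiates the equation to get $(\sn_K Y')'+(\lambda\sn_K-1/\sn_K)Y=0$ for $Y=-J'$. It then runs a Wronskian comparison between $Y_\lambda$ and $Y_\nu$.
\end{itemize}
Since $\sn_K(Y_\lambda'Y_\nu-Y_\nu'Y_\lambda)=(w_\lambda'w_\nu-w_\nu'w_\lambda)/\sn_K$, this is exactly the Wronskian form of the Sturm--Picone alternative you mention in one line.

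\textbf{Your route.} Your main line instead conjugates to the Dirichlet problem for $w=\sn_K J'$. Both claims then follow from two variational facts: a first Dirichlet eigenfunction has no interior zero, and Dirichlet eigenvalues are monotone under inclusion of intervals. This has several advantages:
\begin{itemize}
\item it treats $d<D$ and $d=D$ uniformly;
\item it makes explicit the identification of the variational minimizer with $J_\nu$, which the paper uses tacitly when it invokes $J_\nu'(D)=0$;
\item it is the same reduction the paper itself uses later in Proposition~\ref{prop:triangle-model-strictly-better}.
\end{itemize}

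\textbf{What your route costs.} You must identify $\nu_K(D)=\lambda_1^{\mathrm{Dir}}(0,D)$ at the singular endpoint. You rightly flag this, and it needs both directions of the correspondence. The point to check is that $w''-\ct_K w'+\lambda w=0$ has indicial roots $0$ and $2$ at $r=0$. Finiteness of $\int w^2\,dr/\sn_K$ excludes the root $0$, so every Dirichlet eigenfunction behaves like $r^2$. Consequently:
\begin{itemize}
\item the map $w\mapsto-\lambda^{-1}w'/\sn_K$ returns a $J$ with $J'(0)=0$;
\item the boundary terms in Green's identity at $0$ are $O(r^2)$, which also justifies the orthogonality argument behind ``no interior zero implies first eigenvalue'' in Step~2.
\end{itemize}
The paper's version needs only $Y\sim\frac{\lambda}{2}r$ near $0$ (to get $W(0)=0$) and avoids this bookkeeping.

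Finally, the sentence in Step~1 asserting that $J_\nu$ would be ``at least the second nonconstant'' eigenfunction is incomplete as written. Drop it, since the Dirichlet argument carries that step on its own.
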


\begin{proof}
Let
\[
\nu=\nu_K(D),
\qquad
J=J_\nu.
\]
By Sturm oscillation, $J$ has exactly one zero
\[
r_0\in(0,D).
\]
Choose its sign so that
\[
J>0\quad\text{on }(0,r_0),
\qquad
J<0\quad\text{on }(r_0,D).
\]
Since
\[
(\sn_K J')'
=
-\nu\sn_K J,
\]
and $J'(0)=0$, we obtain
\[
J'(r)<0
\qquad
0<r\leq r_0.
\]
On $(r_0,D)$, the quantity $\sn_K(r)J'(r)$ is strictly increasing.
Since $J'(D)=0$, it follows that
\[
J'(r)<0
\qquad
r_0\leq r<D.
\]

Now assume $0<\lambda<\nu$ and set
\[
Y_\lambda=-J_\lambda',
\qquad
Y_\nu=-J_\nu'.
\]
Differentiating \cref{eq:space-form-Jlambda} and using
\[
\left(\frac{\cs_K}{\sn_K}\right)'
=
-\frac{1}{\sn_K^2},
\]
we obtain
\begin{equation}
\label{eq:space-form-Y}
(\sn_KY_\lambda')'
+
\left(
\lambda\sn_K-\frac{1}{\sn_K}
\right)Y_\lambda
=
0.
\end{equation}
The same equation holds for $Y_\nu$ with $\lambda$ replaced by
$\nu$.

Near $r=0$,
\[
Y_\lambda(r)
=
\frac{\lambda}{2}r+O(r^3),
\qquad
Y_\nu(r)
=
\frac{\nu}{2}r+O(r^3).
\]
Define
\[
W(r)
=
\sn_K(r)
\left(
Y_\lambda'Y_\nu-Y_\nu'Y_\lambda
\right).
\]
Then
\begin{equation}
\label{eq:space-form-Wronskian}
W'(r)
=
(\nu-\lambda)\sn_K(r)Y_\lambda(r)Y_\nu(r).
\end{equation}

Suppose that $Y_\lambda$ has a first zero
$r_1\in(0,D]$.  Then
\[
Y_\lambda>0
\qquad
\text{on }(0,r_1),
\]
while
\[
Y_\nu>0
\qquad
\text{on }(0,D).
\]
Hence
\[
W(r_1)>0.
\]
If $r_1<D$, then
\[
W(r_1)
=
\sn_K(r_1)Y_\lambda'(r_1)Y_\nu(r_1)
\leq0,
\]
a contradiction.  If $r_1=D$, then
\[
Y_\lambda(D)=Y_\nu(D)=0,
\]
so $W(D)=0$, again a contradiction.  Thus
\[
Y_\lambda>0
\qquad
0<r\leq D,
\]
which proves the result.
\end{proof}

\subsection{Proof of \cref{The Main Theorem}}

\subsubsection{Proof of the lower bound and rigidity}
Let
\[
T=\triangle ABC,
\qquad
\mu=\mu_1(T),
\]
and let $u$ be a corresponding first nonconstant Neumann
eigenfunction.

We first consider either $K\leq0$, or $
K>0,
\ 
D<\frac{\pi}{2\sqrt K}.$

Suppose, toward a contradiction, that
\[
\mu\leq\nu_K(D).
\]

Fix the vertex $A$ and define
\[
V_A(X)=J_\mu(d(A,X)).
\]
Since $J_\mu'(0)=0$, the radial function $V_A$ is smooth at $A$.
By \cref{eq:space-form-Jlambda},
\[
-\Delta V_A=\mu V_A.
\]
Since $V_A$ is radial about $A$,
\[
\partial_\nu V_A=0
\]
on $AB\cup AC$.

For $X\in BC^\circ$,
\cref{lem:space-form-opposite-side-distance} gives
\[
d(A,X)<D.
\]
By \cref{lem:space-form-radial-monotonicity},
\[
J_\mu'(d(A,X))<0,
\]
and by \cref{lem:space-form-outward-radial},
\[
\partial_\nu d(A,X)>0.
\]
Therefore
\begin{equation}
\label{eq:space-form-negative-flux}
\partial_\nu V_A
=
J_\mu'(d(A,X))
\partial_\nu d(A,X)
<0
\qquad
\text{on }BC^\circ.
\end{equation}

Since
\[
-\Delta u=\mu u,
\qquad
-\Delta V_A=\mu V_A,
\]
Green's second identity gives
\[
0
=
\int_{\partial T}
\left(
u\,\partial_\nu V_A
-
V_A\,\partial_\nu u
\right)\,ds.
\]
Using the Neumann condition for $u$ and the vanishing of
$\partial_\nu V_A$ on $AB\cup AC$, we obtain
\[
\int_{BC}u\,\partial_\nu V_A\,ds=0.
\]
Because the weight is strictly negative on $BC^\circ$, the function
$u$ must change sign on $BC^\circ$.  Indeed, if it had one sign
there, the integral could vanish only if $u$ vanished identically
on an open segment of $BC$.  Together with $\partial_\nu u=0$, this
would give vanishing Cauchy data and hence $u\equiv0$, a
contradiction.

Repeating the same argument with radial comparison functions centered
at $B$ and $C$, we conclude that $u$ changes sign on all three open
sides.  By
\cref{lem:space-form-three-side-sign}, $u$ has at least three nodal
domains, contradicting
\cref{lem:space-form-first-Neumann-two-domains}.  Hence
\[
\mu_1(T)>\nu_K(D).
\]

Now assume
\[
K>0,
\qquad
D=\frac{\pi}{2\sqrt K}.
\]
If
\[
\mu<\nu_K(D),
\]
then
\cref{lem:space-form-radial-monotonicity} gives
\[
J_\mu'(r)<0
\qquad
0<r\leq D.
\]
Thus the same Green identity argument applies even when points on
the opposite side occur at distance exactly $D$.  Therefore
\[
\mu_1(T)\geq\nu_K(D).
\]

It remains to classify the equality case.  Suppose
\[
\mu_1(T)=\nu_K(D).
\]
If $T$ is not birectangular, then for every vertex $A$ it is not the
case that both adjacent sides have length $D$.  By
\cref{lem:space-form-opposite-side-distance},
\[
d(A,X)<D
\]
on the interior of the opposite side.  Since
\[
J_{\nu_K(D)}'(r)<0
\qquad
0<r<D,
\]
the same Green identity argument again forces $u$ to change sign on
all three sides, a contradiction.  Therefore some vertex, say $A$,
satisfies
\[
AB=AC=D.
\]
Thus $T$ is birectangular.

Conversely, if
\[
AB=AC=D,
\]
then the opposite side lies in the geodesic circle
\[
d(A,\cdot)=D.
\]
The radial eigenfunction
\[
V_A(X)=J_{\nu_K(D)}(d(A,X))
\]
has vanishing normal derivative on $AB$ and $AC$ because it is
radial, and on $BC$ because
\[
J_{\nu_K(D)}'(D)=0.
\]
Hence $\nu_K(D)$ is a nonzero Neumann eigenvalue of $T$.  The lower
bound already proved yields
\[
\mu_1(T)=\nu_K(D).
\]

Finally, after rescaling the unit-sphere formula,
\[
J(r)
=
\frac{3\cos^2(\sqrt K\,r)-1}{2}
\]
satisfies
\[
-\Delta J=6KJ
\]
and
\[
J'(0)
=
J'\left(\frac{\pi}{2\sqrt K}\right)
=
0.
\]
It has exactly one zero in the interval, and hence
\[
\nu_K\left(\frac{\pi}{2\sqrt K}\right)=6K.
\]
\subsubsection{Sharpness}
It remains to prove sharpness for the non-birectangular cases.  Let
$T_{D,\alpha}=\triangle ABC$ be the isosceles geodesic triangle
satisfying
\[
AB=AC=D,
\qquad
\angle BAC=\alpha.
\]
For sufficiently small $\alpha>0$, its base has length strictly less
than $D$, and hence
\[
\diam(T_{D,\alpha})=D.
\]

Use geodesic polar coordinates centered at $A$, with the two equal
sides given by
\[
\theta=\pm\frac{\alpha}{2}.
\]
The opposite side has radial equation
\[
r=R_\alpha(\theta),
\]
where
\begin{equation}
\label{eq:space-form-thin-radial-graph}
\ct_K(R_\alpha(\theta))
=
\ct_K(D)
\frac{\cos\theta}{\cos(\alpha/2)},
\end{equation}
with
\[
\ct_K(r):=\frac{\cs_K(r)}{\sn_K(r)}.
\]
For $K=0$, this is interpreted as
\[
\frac{1}{R_\alpha(\theta)}
=
\frac{1}{D}
\frac{\cos\theta}{\cos(\alpha/2)}.
\]
It follows that
\[
R_\alpha(\theta)\longrightarrow D
\]
uniformly for
\[
|\theta|\leq\frac{\alpha}{2}
\]
as $\alpha\downarrow0$.

Let
\[
J=J_{\nu_K(D)}.
\]
Integrating \cref{eq:space-form-radial-Neumann} and using
$J'(0)=J'(D)=0$, we obtain
\begin{equation}
\label{eq:space-form-radial-mean-zero}
\int_0^D J(r)\sn_K(r)\,dr=0.
\end{equation}

Define
\[
c_\alpha
=
\frac{1}{|T_{D,\alpha}|}
\int_{T_{D,\alpha}}J(r)\,dA
\]
and
\[
f_\alpha(r,\theta)
=
J(r)-c_\alpha.
\]
Then
\[
\int_{T_{D,\alpha}}f_\alpha\,dA=0.
\]
Since
\[
dA=\sn_K(r)\,dr\,d\theta,
\]
the uniform convergence $R_\alpha\to D$ and
\cref{eq:space-form-radial-mean-zero} imply
\[
c_\alpha\longrightarrow0.
\]
Moreover,
\[
\frac1\alpha
\int_{T_{D,\alpha}}
|\nabla f_\alpha|^2\,dA
\longrightarrow
\int_0^D |J'(r)|^2\sn_K(r)\,dr,
\]
and
\[
\frac1\alpha
\int_{T_{D,\alpha}}
f_\alpha^2\,dA
\longrightarrow
\int_0^D J(r)^2\sn_K(r)\,dr.
\]
Therefore
\[
\begin{aligned}
\limsup_{\alpha\downarrow0}\mu_1(T_{D,\alpha})
&\leq
\frac{
\displaystyle\int_0^D |J'(r)|^2\sn_K(r)\,dr
}{
\displaystyle\int_0^D J(r)^2\sn_K(r)\,dr
}\\
&=
\nu_K(D).
\end{aligned}
\]
Since the strict lower bound gives
\[
\mu_1(T_{D,\alpha})>\nu_K(D)
\]
for every nondegenerate member of the family, we conclude that
\[
\mu_1(T_{D,\alpha})
\longrightarrow
\nu_K(D).
\]
Hence
\[
\inf_{\diam(T)=D}\mu_1(T)=\nu_K(D).
\]
This completes the proof.

\begin{remark}[The Euclidean case]
\label{rem:Euclidean-specialization}
When $K=0$, the radial equation \eqref{eq:space-form-Jlambda} is
\[
J_\lambda''+\frac1rJ_\lambda'+\lambda J_\lambda=0,  \qquad J_\lambda(0)=1,
\qquad
J_\lambda'(0)=0.
\]
so   
\[
J_\lambda(r)=J_0(\sqrt\lambda\,r).
\]
Since Bessel functions have the relation $J'_0(x) = J_1(x)$, the Neumann condition at $r=D$ is $
J_1(\sqrt\lambda\,D)=0. $
Consequently,
\[
\nu_0(D)
=
\frac{j_{1,1}^2}{D^2},
\]
where $j_{1,1}$ is the first positive zero of $J_1$.  Thus \cref{The Main Theorem} recovers the sharp Euclidean estimate
\[
\mu_1(T)>
\frac{j_{1,1}^2}{D^2}
\]
for every nondegenerate Euclidean triangle as proved in \cite{LaugesenSiudeja2010}.
\end{remark}

\subsection{Properties of the model}
\subsubsection{Comparison of the models}
Let $\bar\mu_K(D)$ denote the sharp one-dimensional comparison
constant for general convex domains in a two-dimensional space form.
Thus $\bar\mu_K(D)$ is the first positive Neumann eigenvalue of
\begin{equation}
\label{eq:convex-model-comparison}
-\bigl(\cs_K(s)\Phi'(s)\bigr)'
=
\bar\mu_K(D)\cs_K(s)\Phi(s),
\qquad
-\frac D2<s<\frac D2,
\end{equation}
with
\[
\Phi'\left(-\frac D2\right)
=
\Phi'\left(\frac D2\right)
=
0.
\]
Recall $\nu_K(D)$ is the first positive
eigenvalue of
\begin{equation}
\label{eq:triangle-model-comparison}
-\bigl(\sn_K(r)J'(r)\bigr)'
=
\nu_K(D)\sn_K(r)J(r),
\qquad
0<r<D,
\end{equation}
with
\[
J'(0)=J'(D)=0.
\]

From the definition and the equality cases of the models, we have the following. 
\begin{proposition}
\label{prop:triangle-model-strictly-better}
Let \(K\in\mathbb R\). Assume \(D>0\) if \(K\leq 0\), and $0<D\leq \frac{\pi}{2\sqrt K}$
if \(K>0\). Then
\[
\nu_K(D)>\overline{\mu}_K(D).
\]
\end{proposition}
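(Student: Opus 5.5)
The plan is to realize $\nu_K(D)$ as the first nonzero Neumann eigenvalue of an honest convex domain of diameter $D$ in $\MM_K^2$. Then I would invoke the sharp convex comparison \cref{eq:convex-model-comparison} (Andrews--Clutterbuck, Kröger, Bakry--Qian), which gives $\mu_1^N(\Omega)\ge\bar\mu_K(D)$ for convex $\Omega$ of diameter $D$. Strictness would come from the known fact that this bound is not attained by any nondegenerate convex domain, only approached by domains collapsing to a segment. The natural domain is the thin geodesic sector
\[
S_\alpha=\{(r,\theta): 0\le r\le D,\ |\theta|\le \alpha/2\}
\]
in polar coordinates about a point $A$, with $\alpha>0$ small.

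\emph{Step 1: geometry of $S_\alpha$.} I would check that $S_\alpha$ is convex and has diameter exactly $D$ for small $\alpha$. For $K\le0$ the geodesic disk of radius $D$ is convex, and so is any sector of it with angle less than $\pi$. For $K>0$ and $D\le\pi/(2\sqrt K)$ the disk is contained in a closed hemisphere (when $D=\pi/(2\sqrt K)$ it is the hemisphere, whose boundary is a geodesic), so it is again convex and the sector inherits convexity. The diameter is attained between $A$ and the outer arc. Points of the outer arc are within distance $O(\alpha)$ of each other, and the interpolation estimate behind \cref{lem:space-form-opposite-side-distance} controls all other pairs.

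\emph{Step 2: $\mu_1^N(S_\alpha)=\nu_K(D)$ for small $\alpha$.} I would separate variables, writing the eigenfunctions as $J(r)\cos(k\pi(\theta+\alpha/2)/\alpha)$ with $k\ge0$. For $k=0$ the first nonzero eigenvalue is the radial one, namely $\nu_K(D)$ by \cref{eq:space-form-radial-Rayleigh}. For $k\ge1$, the Rayleigh quotient is bounded below by
\[
\frac{(k\pi/\alpha)^2}{\sup_{r\le D}\sn_K(r)^2},
\]
since $\int|\nabla u|^2\ge\int \sn_K^{-2}|\partial_\theta u|^2$ and the $\theta$-Neumann problem has first nonzero eigenvalue $(\pi/\alpha)^2$. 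This tends to $\infty$ as $\alpha\downarrow0$, so it exceeds $\nu_K(D)$ for small $\alpha$. Completeness of the separated eigenfunctions in $L^2(S_\alpha)$ then gives $\mu_1^N(S_\alpha)=\nu_K(D)$.

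\emph{Step 3: conclude.} Applying the convex comparison to $S_\alpha$ gives $\nu_K(D)=\mu_1^N(S_\alpha)\ge\bar\mu_K(D)$, and the non-attainment for genuinely two-dimensional convex domains upgrades this to a strict inequality.

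The main obstacle is this rigidity statement. If I cannot cite a clean non-attainment result in curved space forms, the fallback is a direct ODE argument. I would take the radial eigenfunction $J$, which is monotone by \cref{lem:space-form-radial-monotonicity}, view it on $S_\alpha$ as a function of the distance to the midpoint of a diameter-realizing geodesic, and run the Andrews--Clutterbuck modulus-of-continuity comparison. Equality along the two-point maximum principle would force $S_\alpha$ to be one-dimensional, which it is not. A second fallback is to compare the Prüfer angles of the two Sturm--Liouville problems after translating $r=s+D/2$. This amounts to comparing the drift terms: $\ct_K(r)=\cs_K(r)/\sn_K(r)$ for the triangle model and $-K\sn_K(s)/\cs_K(s)$ for the convex model. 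The former is strictly larger on the relevant range, since $\ct_K(r)>0$ while the latter is the tangent-type term $-\sqrt K\tan(\sqrt K s)$ (respectively $\sqrt{-K}\tanh(\sqrt{-K}s)$, and $0$ when $K=0$). This strict drift comparison would push the first Neumann eigenvalue strictly up, but I would need to verify the sign bookkeeping carefully.
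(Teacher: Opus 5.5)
Your Steps 1 and 2 are fine. In fact you only need $\mu_1^N(S_\alpha)\le\nu_K(D)$, which is immediate: $\int_0^D J\,\sn_K\,dr=0$ makes $J(d(A,\cdot))$ an admissible test function on $S_\alpha$. Combined with the convex comparison theorem, this gives the weak inequality $\nu_K(D)\ge\bar\mu_K(D)$.

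The genuine gap is the strict inequality, which is the entire content of the proposition. The ``known fact'' that the convex bound is never attained by a two-dimensional convex domain is false as you state it. For $K=1$, the closed hemisphere is convex (its boundary is totally geodesic), has diameter $\pi$, and has $\mu_1^N=2$ with eigenfunctions $x_1,x_2$. Meanwhile $\Phi=\sin s$ solves \cref{eq:convex-model-comparison} on $(-\pi/2,\pi/2)$ with $\Phi'(\pm\pi/2)=0$, so $\bar\mu_1(\pi)=2$. You would therefore need a rigidity theorem specific to the range $D\le\pi/(2\sqrt K)$ (and to $K<0$) that also applies to domains with corners such as $S_\alpha$. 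Since the convex comparison for non-smooth domains is normally obtained by smooth approximation, strictness does not come for free.

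Your first fallback only restates this rigidity problem. Your second fallback rests on a false principle: the first Neumann eigenvalue of $u''+bu'+\lambda u=0$ is not monotone in the drift $b$. For constant $b$ on $(0,L)$, the reflection $s\mapsto L-s$ exchanges $b$ and $-b$ without changing the spectrum; the eigenvalue is $\pi^2/L^2+b^2/4$. In Pr\"ufer variables, $\phi'=\cos^2\phi+b\sin\phi\cos\phi+\lambda\sin^2\phi$, and the drift term has opposite signs on the two quadrants $(\pi/2,\pi)$ and $(\pi,3\pi/2)$ traversed by a first Neumann mode. So pointwise domination of drifts is not a matter of sign bookkeeping; it simply does not control the eigenvalue.

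What does control it is the Liouville-transformed potential. With $a=w'/w$ and $v=\sqrt w\,u'$, the weighted Neumann problem becomes the Dirichlet problem for $-v''+(a^2/4-a'/2)v$, and $v$ is its ground state because $u'$ has one sign. This is how the paper argues, entirely in one dimension. The potentials are $V_{\mathrm r}=\frac{3}{4\sn_K^2(r)}-\frac K4$ on $(0,D)$ and $V_{\mathrm c}=\frac{3K}{4\cs_K^2(s)}-\frac K4$ on $(-D/2,D/2)$.

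\begin{itemize}
\item For $K\le0$, one has $V_{\mathrm r}>-K/4\ge V_{\mathrm c}$ on intervals of equal length, which gives the strict inequality directly.
\item For $K>0$, the right translation is $x=r-\pi/(2\sqrt K)$, not $r=s+D/2$. It turns the triangle drift $\ct_K(r)$ into exactly the convex drift $-\sqrt K\tan(\sqrt K x)$. Both problems then have the same even potential, increasing in $|x|$, and differ only in that the triangle model lives on the off-centre interval $(-\pi/(2\sqrt K),-\pi/(2\sqrt K)+D)$. Symmetric decreasing rearrangement of the ground state then gives the strict inequality.
\end{itemize}

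Once you have an argument of this type, the detour through $S_\alpha$ and the two-dimensional convex comparison theorem is no longer needed.
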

For $K \le 0$, this follows from \cite[Theorem 13]{BakryQian2000}. Below we give a precise proof for all $K$. 
\begin{proof}

We first give general formula for transform weighted Neumann problems into Dirichlet
Schrödinger problem. Let \(w>0\) in the interior of an interval and
suppose that $
-(wu')'=\lambda wu.$
Set
\[
a=\frac{w'}{w},
\qquad
v=\sqrt{w}\,u'.
\]
A direct computation gives
\begin{equation}
-v''
+
\left(
\frac{a^2}{4}-\frac{a'}{2}
\right)v
=
\lambda v.
\label{eq:weighted-Neumann-to-Schrodinger}
\end{equation}
If \(u\) is the first positive Neumann eigenfunction, then \(u'\) has
one strict sign in the interior. Thus \(v\), up to an overall sign, is
positive in the interior and vanishes at the endpoints. It is
therefore the first Dirichlet eigenfunction of the operator in
\cref{eq:weighted-Neumann-to-Schrodinger}.

For the triangle model \cref{eq:triangle-model-comparison}, $
w(r)=\sn_K(r), \ 0<r<D.$
We obtain
\begin{equation}
\nu_K(D)
=
\lambda_1^D\bigl((0,D);V_{\mathrm r}\bigr),
\qquad
V_{\mathrm r}(r)
=
\frac{3}{4\sn_K^2(r)}-\frac K4.
\label{eq:radial-Schrodinger-potential}
\end{equation}

For the convex model \cref{eq:convex-model-comparison}, $
w_{\mathrm c}(s)=\cs_K(s), \ 
-\frac D2<s<\frac D2.$
We obtain
\begin{equation}
\overline{\mu}_K(D)
=
\lambda_1^D
\left(
\left(-\frac D2,\frac D2\right);
V_{\mathrm c}
\right),
\qquad
V_{\mathrm c}(s)
=
\frac{3K}{4\cs_K^2(s)}-\frac K4.
\label{eq:centered-Schrodinger-potential}
\end{equation}

We now compare the two Dirichlet problems.
\medskip

\noindent
\emph{Case 1: \(K\leq 0\).} 
By \cref{eq:radial-Schrodinger-potential} and \cref{eq:centered-Schrodinger-potential}
\[
V_{\mathrm r}(r)>-\frac K4 \ge
V_{\mathrm c}(s). 
\]
Since both intervals have length \(D\), the variational
characterization and strict monotonicity with respect to the potential
give
\[
\nu_K(D)
>
\frac{\pi^2}{D^2}-\frac K4
\geq
\overline{\mu}_K(D).
\]

\medskip
\noindent
\emph{Case 2: \(K>0\).}
Set $R_K:=\frac{\pi}{2\sqrt K}$
and define
\begin{equation}\label{eq: Positive potential V_K}
    V_K(x)
=
\frac{3K}{4\cos^2(\sqrt K x)}-\frac K4,
\qquad
-R_K<x<R_K.
\end{equation}
Under the translation $
x=r-R_K,$
we have
\[
\cos(\sqrt Kx)
=
\cos\left(\sqrt Kr-\frac{\pi}{2}\right)
=
\sin(\sqrt Kr).
\]
Consequently,
\[
\nu_K(D)
=
\lambda_1^D
\left(
(-R_K,-R_K+D);V_K
\right).
\]
Similarly,
\[
\overline{\mu}_K(D)
=
\lambda_1^D(I_0;V),
\qquad
I_0=\left(-\frac D2,\frac D2\right).
\]

Let \(\phi>0\) be an \(L^2\)-normalized first Dirichlet eigenfunction
on \(I_{\mathrm r}\), extended by zero outside \(I_{\mathrm r}\), and
let \(\phi^*\) be its symmetric decreasing rearrangement. Since
\(\lvert I_{\mathrm r}\rvert=D\), one has
$
\int_{I_0}(\phi^*)^2\,dx
=
\int_{I_{\mathrm r}}\phi^2\,dx,
$
and
$
\int_{I_0}|(\phi^*)'|^2\,dx
\leq
\int_{I_{\mathrm r}}|\phi'|^2\,dx.
$
Because \(V_K\) is even and strictly increasing in \(\lvert x\rvert\),
the rearrangement inequality gives
\[
\int_{I_0}V_K(x)(\phi^*(x))^2\,dx
<
\int_{I_{\mathrm r}}V_K(x)\phi(x)^2\,dx.
\]
The inequality is strict because \(\phi>0\) throughout
\(I_{\mathrm r}\), while the interval \(I_{\mathrm r}\) is not
centered at the origin.

Consequently,
\[
\begin{aligned}
\overline{\mu}_K(D)
&\leq
\frac{
\displaystyle
\int_{I_0}
\left(
|(\phi^*)'|^2+V_K(\phi^*)^2
\right)\,dx
}{
\displaystyle
\int_{I_0}(\phi^*)^2\,dx
}
<
\frac{
\displaystyle
\int_{I_{\mathrm r}}
\left(
|\phi'|^2+V\phi^2
\right)\,dx
}{
\displaystyle
\int_{I_{\mathrm r}}\phi^2\,dx
}
=
\nu_K(D).
\end{aligned}
\]

The two cases together prove
\[
\nu_K(D)>\overline{\mu}_K(D).
\]
\end{proof}

\subsubsection{Monotonicity of $\nu_K(D)$}
\begin{lemma}
\label{lem:nu-monotone} The first positive Neumann eigenvalue $\nu_K(D)$ of the model \cref{eq:space-form-radial-Neumann} is strictly decreasing on $(0,\infty)$ if $K\leq 0$, and on $\left(0,\frac{\pi}{\sqrt K}\right)$ if $K>0.$
More precisely, if $J_D$ is a corresponding first positive radial
Neumann eigenfunction, then
\begin{equation}
\label{eq:nu-derivative}
\nu_K'(D)
=
-\nu_K(D)\,
\frac{
\sn_K(D)\,J_D(D)^2
}{
\displaystyle
\int_0^D J_D(r)^2\sn_K(r)\,dr
}
<0.
\end{equation}
\end{lemma}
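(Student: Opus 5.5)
The plan is to use a Hadamard-type variational argument for the Sturm–Liouville problem \eqref{eq:space-form-radial-Neumann}. Positivity of $\nu_K(D)$ and the sign of the right side of \eqref{eq:nu-derivative} are then immediate; the real content is the derivative formula.

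\emph{Smooth dependence on $D$.} First I would show that $D\mapsto\nu_K(D)$ is differentiable and that $J_D$ can be chosen to depend smoothly on $D$. Use the shooting function $J_\lambda$ from \eqref{eq:space-form-Jlambda}, which depends analytically on $\lambda$. Then $\nu_K(D)$ is the smallest positive root $\lambda$ of
\[
F(\lambda,D):=J_\lambda'(D)=0 .
\]
By \cref{lem:space-form-radial-monotonicity} (whose proof only uses Sturm oscillation and so works for every $D<\pi/\sqrt K$), this root is simple in the sense that $J_\lambda'(D)<0$ for $\lambda<\nu$. To apply the implicit function theorem I need $\partial_\lambda F\neq 0$. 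This follows from the Wronskian identity \eqref{eq:space-form-Wronskian}, or alternatively from the formula derived below, which shows that $\partial_\lambda F$ and $\partial_D F$ are related by a nonzero factor. Set $J_D:=J_{\nu_K(D)}$, normalized by $J_D(0)=1$. The formula is invariant under scaling of $J_D$, so this normalization is harmless.

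\emph{Derivative computation.} Differentiate the identity $J_D'(D)=0$ in $D$. Write $\dot J=\partial_D J_D=\nu'\,\partial_\lambda J_\lambda|_{\lambda=\nu}$. Then
\[
\dot J'(D)+J_D''(D)=0 .
\]
From the equation together with $J_D'(D)=0$ we get $J_D''(D)=-\nu J_D(D)$, so $\dot J'(D)=\nu J_D(D)$.

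Next, differentiate the equation in $D$:
\[
-(\sn_K\dot J')'=\nu\sn_K\dot J+\nu'\sn_K J_D .
\]
Multiply by $J_D$ and integrate over $(0,D)$. Integrate by parts twice, using $J_D'(0)=J_D'(D)=0$ and $\dot J'(0)=0$ (the latter holds because $J_\lambda'(0)=0$ for all $\lambda$). The boundary terms reduce to
\[
-\sn_K(D)\,\dot J'(D)\,J_D(D)=\nu'\int_0^D J_D^2\sn_K\,dr .
\]
Substituting $\dot J'(D)=\nu J_D(D)$ gives exactly \eqref{eq:nu-derivative}.

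\emph{Strict negativity.} It remains to check that $J_D(D)\neq0$ and $\sn_K(D)>0$. The second holds on the stated range. For the first, suppose $J_D(D)=0$. Since also $J_D'(D)=0$, uniqueness for the ODE at the regular point $r=D$ would give $J_D\equiv0$, a contradiction. With $\nu_K(D)>0$, the right side of \eqref{eq:nu-derivative} is therefore strictly negative, and strict monotonicity follows.

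\emph{Main obstacle.} The delicate step is justifying the differentiability rigorously. The endpoint $r=0$ is singular, so I have to confirm that $\partial_\lambda J_\lambda$ is regular there with $\partial_\lambda J_\lambda'(0)=0$; this is what makes the boundary terms at $0$ vanish, since $\sn_K(0)=0$ kills them. I would handle it with the series expansion of $J_\lambda$ at $0$, which is analytic in $\lambda$.

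The other point to watch is the range $D<\pi/\sqrt K$ when $K>0$. There $\sn_K>0$ on $(0,D]$, so $D$ is a regular point and $\nu_K(D)$ remains a simple eigenvalue, which is all the argument needs.
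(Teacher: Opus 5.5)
Your proposal is correct and follows essentially the same route as the paper. Both arguments differentiate the equation in $D$, pair with $J_D$ and integrate by parts (the endpoint $r=0$ is killed by $\sn_K(0)=0$), differentiate the Neumann condition at the moving endpoint to get $\dot J'(D)=\nu J_D(D)$, and use ODE uniqueness at $r=D$ to get $J_D(D)\neq 0$.

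Your shooting-function and implicit-function-theorem justification of differentiability is a useful addition that the paper only asserts from simplicity. One fix is needed: base $\partial_\lambda J_\lambda'(D)\neq 0$ on the same integration by parts applied directly to $\partial_\lambda J_\lambda$ (or on the Wronskian identity), not on the $D$-derivative formula, since that would be circular.
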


\begin{proof}
Write
\[
\nu=\nu_K(D),
\qquad
J=J_D,
\qquad
w(r)=\sn_K(r).
\]
Since the first positive radial Neumann eigenvalue is simple, we may
choose $\nu_K(D)$ and $J_D$ differentiably with respect to $D$.
Let a dot denote differentiation with respect to $D$, with $r$ fixed.

Differentiating
\[
-(wJ')'=\nu wJ
\]
with respect to $D$ gives
\[
-(w\dot J')'
=
\nu' wJ+\nu w\dot J.
\]
Multiplying by $J$ and integrating over $(0,D)$ yields
\[
-\bigl[Jw\dot J'\bigr]_0^D
+
\int_0^D wJ'\dot J'\,dr
=
\nu'\int_0^D wJ^2\,dr
+
\nu\int_0^D wJ\dot J\,dr.
\]
On the other hand, multiplying the original eigenvalue equation by
$\dot J$ and integrating by parts, using
\[
J'(0)=J'(D)=0,
\]
gives
\[
\int_0^D wJ'\dot J'\,dr
=
\nu\int_0^D wJ\dot J\,dr.
\]
Subtracting the two identities, we obtain
\begin{equation}
\label{eq:nu-derivative-intermediate}
-\bigl[Jw\dot J'\bigr]_0^D
=
\nu'\int_0^D wJ^2\,dr.
\end{equation}

Since $w(0)=0$, the contribution at $r=0$ vanishes. At the moving
endpoint, differentiate the Neumann condition
\[
J_D'(D)=0
\]
to obtain
\[
\dot J'(D)+J''(D)=0.
\]
Since $J'(D)=0$, the eigenvalue equation at $r=D$ gives
\[
J''(D)=-\nu J(D),
\]
and hence
\[
\dot J'(D)=\nu J(D).
\]
Substituting this into
\cref{eq:nu-derivative-intermediate} gives
\[
-\nu w(D)J(D)^2
=
\nu'\int_0^D wJ^2\,dr.
\]
Therefore
\[
\nu_K'(D)
=
-\nu_K(D)\,
\frac{
\sn_K(D)\,J_D(D)^2
}{
\displaystyle
\int_0^D J_D(r)^2\sn_K(r)\,dr
}.
\]

For the stated ranges of $D$,
\[
\sn_K(D)>0.
\]
Also $J_D(D)\neq0$, since otherwise
\[
J_D(D)=J_D'(D)=0
\]
would imply $J_D\equiv0$ by uniqueness for the ODE. Thus every factor
on the right-hand side except the leading minus sign is positive, and
hence
\[
\nu_K'(D)<0.
\]
\end{proof}

\subsection{Explicit lower bound}
\subsubsection{For positively curved case}

Let $K> 0$. Recall the Schr\"odinger potential \cref{eq: Positive potential V_K}
\begin{equation*}
    -v''+\left(\frac{3K}{4\sin^2(\sqrt{K}r)}-\frac{K}{4}\right)v=\nu_K(D)v
\end{equation*}
with Dirichlet conditions $v(0)=v(D)=0$, and the Rayleiqh quotient characterization
\begin{align*}
    \nu_K(D)=\inf_{v\in H^{1}_0(0,D)\backslash \{0\}}\frac{\int_0^D(|v'|^2+V_K(r)v^2)dr}{\int_0^Dv^2 dr}.
\end{align*}

The eigenvalue problem is exactly solvable for $D_0=\frac{\pi}{2\sqrt{K}}$.  This corresponds geometrically to the hemisphere case.  For $0<D\leq D_0$, define $\delta:=\frac{D}{D_0}=\frac{2\sqrt{K}D}{\pi}$.

In order to apply the exact bounds, we rescale the problem from $(0,D)$ to $(0,D_0)$.  Let
\begin{align*}
    w(x):=\sqrt{\delta}v(\delta x)
\end{align*}
so that
\begin{align*}
    \int_0^{D_0}w^2dx &= \int_0^D v^2dr\\
    \int_0^{D_0}|w'|^2dx&=\delta^2\int_0^D|v'|^2dr\\
    \int_0^{D_0}V_K(\delta x)w(x)^2dx&=\int_0^DV_K(r)v(r)^2dr.
\end{align*}
Applying these to the Rayleigh quotient we get
\begin{align*}
    \nu(D)=\inf_{w\in H^{1}_0(0,D_0)\backslash \{0\}}\frac{\delta^{-2}\int_0^{D_0}|w'|^2dx+\int_0^{D_0}V_K(\delta x)w^2dx}{\int_0^{D_0}w^2 dx}.
\end{align*}
At $D_0$, the eigenvalue is 6K hence
\begin{align*}
    \int_0^{D_0}(|w'|^2+V_K(x)w^2)dx \geq 6K\int_0^{D_0}w^2.
\end{align*}
Wirtinger's inequality on $(0,D_0)$ gives
\begin{align*}
    \int_0^{D_0}|w'|^2dx \geq \frac{\pi^2}{D_0^2}\int_0^{D_0}w^2dx=4K\int_0^{D_0}w^2dx.
\end{align*}
By direct computation, we have 
\begin{align*}
    V_K(\delta x)- V_K(x)=\frac{3K}{4}(\csc^2(\delta\sqrt{K}x)-\csc^2(\sqrt{K}x).
\end{align*}
This is decreasing in $x$ for $(0,\frac{\pi}{2\sqrt{K}}]$.  Hence the minimum occurs at $x=D_0$.
Now
\begin{align*}
    \delta^{-2}&\int_0^{D_0}|w'|^2dx+\int_0^{D_0}V_K(\delta x)w^2dx\\ &= \int_0^{D_0}(|w'|^2+V_K(x)w^2)dx+(\delta^{-2}-1)\int_0^{D_0}|w'|^2dx+\int_0^{D_0}(V_K(\delta x)-V_K(x))w^2dx\\
    &\geq 6K+4K(\delta^{-2}-1)+\frac{3}{4}\left(\frac{1}{\sn_K(D)^2}-K\right).
\end{align*}

Hence we have
\begin{align}  \label{eq: nu explicite lower bound}
    \nu_K(D)\geq \frac{\pi^2}{D^2}+\frac{5K}{4}+\frac{3K}{4\sin^2(\sqrt{K}D)} \ge 6K.
\end{align}

\subsubsection{For negatively curved case}
Let $K<0$ and set $K=-\kappa^2$, $\kappa >0$ using $v(r)=-\sqrt{\frac{\sinh(\kappa r)}{\kappa}}J'(r)$, the Schr\"odinger normal form is
\begin{align*}
    -v''+\left(\frac{\kappa^2}{4}+\frac{3\kappa^2}{4\sinh^2(\kappa r)}\right)v=\nu_K(D)v,
\end{align*}
with Dirichlet conditions $v(0)=v(D)=0$.  Let $V_K(r)=\frac{\kappa^2}{4}+\frac{3\kappa^2}{4\sinh^2(\kappa r)}$.  By direct computation, $V_K(r)$ is decreasing so that $V_K(r)\geq V_K(D)$.  Combining with Wirtinger's inequality on $[0,D]$, we get
\begin{align*}
    \nu_K(D)\geq \frac{\pi^2}{D^2}+\frac{\kappa^2}{4}+\frac{3\kappa^2}{4\sinh^2(\kappa D)}
\end{align*}
for any $D>0$.  Unlike in the $K>0$ case, we do not have a finite reference interval $[0,D_0]$ which we can compare our problem to on $[0,D]$ since the one-dimensional model does not admit any elementary polynomial solutions.  However, we can bound the potential using $\csch^2(t)\geq \frac{1}{t^2}-\frac{1}{3}$ so that 
\begin{align*}
    V_K(r)&=\frac{\kappa^2}{4}+\frac{3\kappa^2}{4\sinh^2(\kappa r)}\\
    &\geq \frac{\kappa^2}{4}+\frac{3}{4}\left(\frac{1}{r^2}-\frac{\kappa^2}{3}\right)=\frac{3}{4r^2}.
\end{align*}
Hence the comparison operator is $-\frac{d^2}{dr^2}+\frac{3}{4r^2}$ which has the first Dirichlet eigenvalue $\frac{j_{1,1}^2}{D^2}$ hence we obtain a curvature independent estimate $\nu_K(D)\geq\frac{j_{1,1}^2}{D^2}$.  Combining the two estimates we get
\begin{equation}\label{nu lower bound K<0}
    \nu_K(D) \geq \max\left\{\frac{j^2_{1,1}}{D^2},\frac{\pi^2}{D^2}+\frac{|K|}{4}+\frac{3|K|}{4\sinh^2(\sqrt{|K|}D)}\right\}.
\end{equation}

\section{Hot spots for non acute spherical triangles}\label{section:Spherical Hatcher Theorem}
In this section, we adapt most lemmas and arguments from \cite{Hatcher2025}. To prove \cref{sphericalHatcherTheorem}, according to \cite[Theorem~1.5]{Hatcher2025} and the discussion therein, we need to show the key proposition
\begin{proposition}[Spherical mixed-Neumann eigenvalue comparison]\label{Spherical Hatcher lemma 5.2}
    Let $T \subset \mathbb{S}^2$ be a geodesic triangle contained in the upper hemisphere with diameter $D\leq \pi/2$. After applying an isometry, assume that the distinguished side $e$ lies on the equator. Let $\mu=\mu_1(T)$ be the first positive Neumann eigenvalue of $T$, and let $\lambda_1(T ; e)$ be the first mixed eigenvalue with Neumann condition on $e$ and Dirichlet condition on the other two sides. Then
    $$
\mu_1^N(T) \leq \lambda_1(T ; e) .
$$
\end{proposition}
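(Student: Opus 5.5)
The plan is to bound $\mu_1^N(T)$ from above by the Rayleigh quotient of a test function built from the mixed eigenfunction by differentiating along a Killing field, as in Hatcher's hyperbolic argument. Let $\varphi>0$ be the first mixed eigenfunction: $-\Delta\varphi=\lambda\varphi$ with $\lambda=\lambda_1(T;e)$, $\partial_\nu\varphi=0$ on $e$ and $\varphi=0$ on the other two sides $e_1,e_2$. Let $X$ be the rotation Killing field of $\Sph$ whose axis passes through the north and south poles, so $X$ is tangent to the equator and hence to $e$. Set $\psi:=X\varphi$. Since $X$ generates isometries, it commutes with $\Delta$, so $-\Delta\psi=\lambda\psi$ in $T$.

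\emph{Step 1: $\psi$ is admissible.} Since $\operatorname{div}X=0$,
\[
\int_T\psi\,dA=\int_{\partial T}\varphi\,\langle X,\nu\rangle\,ds .
\]
This vanishes because $\varphi=0$ on $e_1\cup e_2$ and $\langle X,\nu\rangle=0$ on $e$. So $\psi$ has mean zero with no adjustment. I also need $\psi\in H^1(T)$ and $\psi\not\equiv0$. The second holds because $\varphi$ is not rotation invariant: it vanishes on $e_1$, which is not a latitude circle. For the first I will use the local expansion of $\varphi$ near each vertex, of the form $\rho^{\pi/\alpha}\sin(\cdot)$ at vertices between two Dirichlet sides and $\rho^{\pi/(2\alpha)}$-type at the two vertices on $e$. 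Here the hypothesis $D\le\pi/2$, which keeps $T$ inside the hemisphere, is used to control the angles. If $\nabla\psi$ fails to be square integrable at a vertex with a large angle, I would fall back on a cutoff argument near that vertex.

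\emph{Step 2: Rayleigh quotient.} Integrating by parts with $-\Delta\psi=\lambda\psi$ gives
\[
\int_T|\nabla\psi|^2=\lambda\int_T\psi^2+\int_{\partial T}\psi\,\partial_\nu\psi\,ds .
\]
Therefore
\[
\mu_1^N(T)\le\lambda+\frac{\int_{\partial T}\psi\,\partial_\nu\psi\,ds}{\int_T\psi^2\,dA},
\]
and it suffices to prove that the boundary term is $\le 0$.

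On $e$, the flow of $X$ preserves the equator and its normal, so $\partial_\nu\psi=X(\partial_\nu\varphi)=0$ and $e$ contributes nothing. On a Dirichlet side $e_i$, $\varphi=0$ forces $\nabla\varphi=(\partial_\nu\varphi)\nu$, so $\psi=\langle X,\nu\rangle\,\partial_\nu\varphi$. Next I will write $X=\langle X,\nu\rangle\nu+\langle X,\tau\rangle\tau$ and use three facts: $e_i$ is a geodesic, $\Delta\varphi=-\lambda\varphi=0$ on $e_i$, and $\nabla X$ is skew. With these I expect to reduce $\psi\,\partial_\nu\psi$ on $e_i$ to a quadratic expression in $\partial_\nu\varphi$ plus a tangential derivative $\tfrac12\partial_\tau\bigl(\langle X,\nu\rangle\langle X,\tau\rangle(\partial_\nu\varphi)^2\bigr)$. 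The tangential derivative integrates to vertex contributions. These vanish at the apex, where $\partial_\nu\varphi=0$, and at the base vertices, where the terms decay. The remaining quadratic term involves $\langle X,\nu\rangle$ and the rotation rate of $X$, which is essentially the $\cs$ of the height above the equator. Its sign should be governed by the position of $e_i$ relative to the axis, and that is where $D\le\pi/2$ enters.

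\emph{Main obstacle.} I expect the sign of this residual boundary integral on $e_1\cup e_2$ to be the hardest point. In Hatcher's Euclidean and hyperbolic setting the analogous term vanishes or has a favorable sign. On the sphere, curvature produces an extra term proportional to $K\langle X,\nu\rangle\langle X,\tau\rangle$, and this is exactly the obstruction he identified. My first attempt will be to choose the axis of $X$ optimally among rotations preserving the equator, or to replace $X$ by a combination with the Killing field rotating about an axis in the equatorial plane (adjusted to keep mean zero). I would then show, using $d(\cdot,\cdot)\le\pi/2$ on $T$ in the spirit of \cref{lem:space-form-opposite-side-distance}, that $\langle X,\nu\rangle$ and the curvature correction have compatible signs on each $e_i$. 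If the pointwise sign fails, I would instead compare the integrated term with $\lambda\int_T\psi^2$ using the lower bound $\lambda\ge\mu_1^N(T)\ge 6$ coming from \cref{The Main Theorem}.
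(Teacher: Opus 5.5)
\textbf{There is a genuine gap.} It sits exactly at the step you flag as the main obstacle. The proposal never shows $\int_{\partial T}\psi\,\partial_\nu\psi\,ds\le 0$, and neither fallback helps:
\begin{itemize}
\item The only Killing fields of $\Sph$ tangent to the equator are multiples of $X(p)=e_3\times p$, so there is no axis to optimize.
\item Invoking $\lambda\ge\mu_1^N(T)\ge 6$ is circular, since $\lambda\ge\mu_1^N(T)$ is the claim itself.
\end{itemize}
The curvature term you worried about is actually harmless. Use $\nabla_V X=\sin t\,JV$, where $t$ is the latitude and $J$ is the rotation by $\pi/2$ with $\nu=J\tau$ on $e_1=PR$, and orient $e_1$ from its endpoint $P\in e$, where the angle is $\alpha$. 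Along $e_1$ one has $\langle X,\tau\rangle\equiv\cos\alpha$, $\partial_\tau\langle X,\nu\rangle=\sin t$, and $\nabla^2\varphi(\nu,\nu)=\nabla^2\varphi(\tau,\tau)=0$. Hence $\partial_\nu\psi=\cos\alpha\,\partial_\tau(\partial_\nu\varphi)$, and
\[
\int_{e_1}\psi\,\partial_\nu\psi\,ds=-\frac{\cos\alpha}{2}\int_{e_1}\sin t\,(\partial_\nu\varphi)^2\,ds
\]
whenever the endpoint terms vanish. The same holds on $e_2$ with the angle $\beta$ at $Q$. On the sphere this sign is favorable when $\alpha,\beta\le\pi/2$.

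\textbf{The decisive failure is at the endpoints of $e$, and $D\le\pi/2$ does not prevent it.} The hypotheses allow an obtuse angle at an endpoint of $e$: take $e$ to be one of the equal sides of the obtuse isosceles triangles of diameter $\pi/2$ in \S\ref{Section: lower bound isoceles}.
\begin{itemize}
\item At such a vertex, with angle $\alpha>\pi/2$, the positive mixed eigenfunction behaves like $c\rho^{\pi/(2\alpha)}\cos(\pi\theta/(2\alpha))$ with $c>0$ and $\pi/(2\alpha)<1$.
\item Since $X\neq 0$ there, $|\nabla\psi|\sim\rho^{\pi/(2\alpha)-2}\notin L^2$, so $\psi\notin H^1(T)$.
\item The endpoint term above then blows up, and no cutoff repairs this. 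By lower semicontinuity, any $L^2$-approximation of $\psi$ has unbounded Dirichlet energy, while $\int_T\psi^2$ stays finite.
\end{itemize}
Your route therefore proves the proposition only when both angles at the endpoints of $e$ are at most $\pi/2$, for instance when $e$ is the longest side.

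\textbf{How the paper avoids this.} It uses no vertex-singular test function. Instead it takes the smooth explicit solution $w=(\cos t)^a e^{ias}$ with $a(a+1)=\mu$, which satisfies the Neumann condition on $e$. It then argues by contradiction on $\operatorname{span}\{\varphi,w\}$. There the cross term $\int_{\partial T}\varphi\,\partial_\nu\bar w\,ds$ vanishes, and no mean-zero condition is needed. The sign reduces to $L'(t)\le 0$ for the angular width, i.e.\ to $\cot\alpha+\cot\beta\ge 0$. This follows from the two sides through the vertex opposite $e$ having length at most $\pi/2$, and it tolerates one obtuse base angle.
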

This proposition is the exact analogue of Hatcher's Lemma 5.2 in \cite{Hatcher2025}. For our purpose, it is sufficient to show it holds for triangles with diameter $D\leq \pi/2.$ We now prove it.

Without loss of generality, we can assume $T$ is contained in the first octant, as its diameter is at most $\pi/2$. Let $t$ be the distance from the equator, measured into the upper hemisphere, and let $s$ be arclength along the equator. Then

$$
g=d t^2+\cos ^2 t d s^2, \quad d A=\cos t d s d t
$$

and the Laplacian is

$$
\Delta=\partial_{t t}-\tan t \partial_t+\sec ^2 t \partial_{s s} .
$$

The side $e$ is contained in $\{t=0\}$. For each $t$, consider the latitude slice

$$
T_t=T \cap\{d(\cdot, e)=t\}
$$

Assume this slice is represented by an interval $s_{-}(t)<s<s_{+}(t).$ Define

$$
L(t)=s_{+}(t)-s_{-}(t) .
$$

Thus $L(t)$ is the angular width of $T$ at height $t$. It is also the length of the orthogonal projection of the latitude slice onto the equator. The actual spherical length of the slice is $\ell(t)=\cos t L(t).$

Let $h$ denote the maximal distance of points of $T$ from the equator. We now follow the ideas of the proof of \cite[Lemma~5.2]{Hatcher2025} and \cite[Theorem~3.1]{Rohleder2017} and construct a test function.
\subsection{Construction of the test function}
For notational purposes, denote $\mu=\mu_1^N(T)$ the first positive Neumann eigenvalue of $T$ and $\lambda_1=\lambda_1(T ; e)$ the first mixed eigenvalue. Choose $a>0$ so that
$$
a(a+1)=\mu.
$$
Equivalently,
$$
a=\frac{-1+\sqrt{1+4 \mu}}{2}.
$$
Consider the complex-valued function
\begin{equation}\label{eq:w}
w(t, s)=(\cos t)^a e^{i a s}.
\end{equation}
Its derivatives are
$$
w_t=-a \tan t w, \quad w_s=i a w.
$$

A direct calculation gives
$$
-\Delta w=a(a+1) w=\mu w.
$$
Also,
$$
w_t(0, s)=0,
$$
so $w$ satisfies the Neumann condition on the equatorial side $e$. This is the spherical analogue of Hatcher's hyperbolic exponential $y^s$.
Define
$$
B_\mu(f)=\int_T|\nabla f|^2 d A-\mu \int_T|f|^2 d A.
$$

For the function $w$,
$$
|\nabla w|^2=\left|w_t\right|^2+\sec ^2 t\left|w_s\right|^2 .
$$
Therefore,
$$
|\nabla w|^2=a^2\left(\tan ^2 t+\sec ^2 t\right)(\cos t)^{2 a}.
$$

Using $\mu=a(a+1)$, we obtain
$$
|\nabla w|^2-\mu|w|^2=a(\cos t)^{2 a}\left(2 a \tan ^2 t-1\right).
$$
Since this expression is independent of $s$, integration over each latitude slice gives
$$
B_\mu(w)=a \int_0^h L(t) \cos ^{2 a+1} t\left(2 a \tan ^2 t-1\right) d t.
$$

Now define
$$
F(t)=\sin t \cos ^{2 a} t .
$$
Then
$$
F^{\prime}(t)=\cos ^{2 a+1} t\left(1-2 a \tan ^2 t\right).
$$
Consequently,
$$
B_\mu(w)=-a \int_0^h L(t) F^{\prime}(t) d t.
$$

Integrating by parts gives
$$
B_\mu(w)=-a[L(t) F(t)]_0^h+a \int_0^h L^{\prime}(t) F(t) d t.
$$

The boundary term vanishes:
\begin{itemize}
    \item At $t=0, F(0)=0$.
    \item At the top of the triangle, either $L(h)=0$, because the slice collapses to the opposite vertex, or $h= \pi / 2$, in which case $F(h)=0$.
\end{itemize}
Hence
$$
B_\mu(w)=a \int_0^h L^{\prime}(t) \sin t \cos ^{2 a} t d t.
$$
This is the main identity. Because $a>0$ and
$$
\sin t \cos ^{2 a} t>0
$$
for $0<t<h<\pi / 2$, we immediately obtain:
If $L^{\prime}(t) \leq 0$ almost everywhere and $L^{\prime}(t)<0$ on a set of positive measure, then
$$
B_\mu(w)<0 .
$$
\begin{remark}
    More generally, pointwise monotonicity is not necessary. It is enough that
$$
\int_0^h L^{\prime}(t) \sin t \cos ^{2 a} t d t<0
$$
Thus some positive values of $L^{\prime}$ are allowed, provided that the weighted negative part dominates.
\end{remark}

\subsection{The derivative of the angular width $L$}
\begin{lemma}
      $L'(t)\leq 0$ is true for triangles $T :=\triangle PQR \subset$ first octant. Moreover, the inequality is strict unless
$$
d(P, R)=d(Q, R)=\frac{\pi}{2} .
$$
In case of equality, $R=O$ is the pole of the great circle containing $P Q$; the triangle $\triangle PQR$ is a bi-rectangular isosceles triangle; and $L(t) \equiv d(P, Q) .$ 
In particular, if $\bar{T}$ is contained in the interior of the first octant, then $L^{\prime}(t)<0$ for every side $e$ of $T$ and every interior latitude $0<t<h$.
\end{lemma}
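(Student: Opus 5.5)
We will compute $L(t)$ explicitly in the coordinates $(t,s)$ introduced above. The claim then reduces to a sign condition on the two base angles of $T$ at the endpoints of $e$.

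\emph{Step 1: the width formula.} Place $e=PQ$ on the equator $\{t=0\}$ with $P$ at $s=0$ and $Q$ at $s=c:=d(P,Q)$, and let $R$ have height $h$. Every great circle meeting the equator at longitude $s_0$ with inclination $\beta\in(0,\pi)$ has the equation $\tan t=\tan\beta\,\sin(s-s_0)$. Let $\beta_P,\beta_Q$ be the interior angles of $T$ at $P$ and $Q$. For $0<t<h$ the latitude slice $T_t$ is bounded by the sides $PR$ and $QR$, so
\[
s_-(t)=\arcsin\bigl(\tan t\,\cot\beta_P\bigr),\qquad s_+(t)=c-\arcsin\bigl(\tan t\,\cot\beta_Q\bigr).
\]
When $\beta_P=\pi/2$ the side $PR$ is the meridian $s=0$, and the first formula still holds with $\cot\beta_P=0$; the same is true at $Q$. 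Differentiating gives
\[
L'(t)=-\sec^2 t\left(\frac{\cot\beta_P}{\sqrt{1-\tan^2 t\,\cot^2\beta_P}}+\frac{\cot\beta_Q}{\sqrt{1-\tan^2 t\,\cot^2\beta_Q}}\right),\qquad 0<t<h.
\]
The square roots are positive on $(0,h)$ because $T_t$ is nonempty there.

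\emph{Step 2: sign of the base angles.} From Step 1, $L'\le 0$ as soon as $\beta_P,\beta_Q\le\pi/2$. Moreover $L'<0$ on all of $(0,h)$ unless $\cot\beta_P=\cot\beta_Q=0$. This step is the main obstacle: I must show that the hypothesis ``$T\subset$ first octant with $e$ on the equator'' forces both base angles to be non-obtuse. Mere containment in some octant does not obviously exclude an obtuse base angle, since $R$ could project outside the segment $[0,c]$.

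I would first try to use the diameter bound $D\le\pi/2$ together with the spherical law of cosines:
\[
\cos\beta_P=\frac{\cos PR-\cos PQ\cos QR}{\sin PQ\,\sin QR}\cdot(\text{positive factor}).
\]
So $\beta_P\le\pi/2$ if and only if $\cos PR\ge\cos PQ\cos QR$. All three sides are at most $\pi/2$, so all cosines are nonnegative. If $e$ is a longest side, then $\cos PR\ge\cos PQ\ge\cos PQ\cos QR$, and similarly at $Q$. This is exactly the setting of Theorem~\ref{sphericalHatcherTheorem}, where $e$ is the longest side. Hence I would normalize so that the octant is spanned by $P$, the pole $O$, and the point at longitude $\pi/2$, with $R$ projecting into $[0,c]$, and make explicit that this normalization is available for the side $e$ in use.

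\emph{Step 3: equality and the interior case.} Equality $L'\equiv 0$ forces $\beta_P=\beta_Q=\pi/2$. Then $PR$ and $QR$ are meridians, so $R=O$ is the pole of the great circle through $PQ$. This gives $d(P,R)=d(Q,R)=\pi/2$, so $T$ is birectangular isosceles, and $L(t)\equiv c$. Conversely, if $d(P,R)=d(Q,R)=\pi/2$, then the law of cosines gives $\cos\beta_P=\cos\beta_Q=0$.

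Finally, suppose $\bar T$ lies in the open octant. Then no vertex is at distance $\pi/2$ from another, so every base angle is strictly acute. By Step 1, $L'(t)<0$ for each choice of side $e$ and every interior latitude $0<t<h$.
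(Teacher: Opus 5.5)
Your Step 1 is fine. Since $d(P,R),d(Q,R)\le\pi/2$, it remains valid when a base angle is obtuse; the arcsine is then simply negative.

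The gap is in Step 2. You make the sign of $L'$ depend on \emph{each} of $\cot\beta_P$ and $\cot\beta_Q$ being nonnegative, and you can justify that only when $e$ is a longest side. The lemma, however, asserts $L'\le0$ for an arbitrary side $e$ of a triangle whose sides are all at most $\pi/2$; its last sentence says ``for every side $e$''. Downstream, Proposition~\ref{Spherical Hatcher lemma 5.2} is needed for an arbitrary Neumann edge in the analogue of Hatcher's Lemma 7.1, not just the longest one. Obtuse base angles genuinely occur here. A small, nearly Euclidean obtuse triangle has all sides $<\pi/2$ and lies in the interior of an octant. Its obtuse angle sits at an endpoint of each of its two shorter sides. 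So your closing claim, ``no vertex is at distance $\pi/2$ from another, so every base angle is strictly acute,'' is false, and the interior-octant statement is not proved either.

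The missing idea is that only the \emph{sum} of the two cotangents needs a sign. Let $\gamma$ be the angle at $R$. The spherical laws of cosines and sines give
\[
\cot\beta_P+\cot\beta_Q=\frac{\bigl(\cos d(P,R)+\cos d(Q,R)\bigr)\bigl(1-\cos d(P,Q)\bigr)}{\sin d(P,R)\,\sin d(Q,R)\,\sin\gamma}.
\]
This is $\ge0$ because $d(P,R),d(Q,R)\le\pi/2$, and it is $>0$ unless $d(P,R)=d(Q,R)=\pi/2$.

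Now set $q=\tan t$ and $F_q(x)=x/\sqrt{1-q^2x^2}$. This function is odd and strictly increasing on the range containing $\pm\cot\beta_P$ and $\cot\beta_Q$. Hence $\cot\beta_Q\ge-\cot\beta_P$ yields $F_q(\cot\beta_P)+F_q(\cot\beta_Q)\ge0$, with strict inequality outside the exceptional case. That sum is exactly the bracket in your formula for $L'$.

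This argument treats every side at once. It gives the equality case $d(P,R)=d(Q,R)=\pi/2$ directly. It also gives the interior-octant statement from $\cos d(P,R)>0$ and $\cos d(Q,R)>0$, rather than from an acuteness claim that does not hold.
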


\begin{proof}
    Fix a triangle $\triangle PQR$ in the first octant of the sphere. Fix $e=PQ$ to be the Neumann edge. All three side lengths of $T$ are at most $\pi / 2$. Write
    $$
a=d(Q, R), \quad b=d(P, R), \quad c=d(P, Q)
$$
so that
$$
a, b, c \leq \frac{\pi}{2}.
$$

Let $\alpha$ and $\beta$ be the interior angles at $P$ and $Q$, respectively.
\par Step 1: Formula for the latitude width $L$:
After applying an isometry, suppose that $P Q$ lies on the equator and use coordinates
$$
X(t, s)=(\cos t \cos s, \cos t \sin s, \sin t),
$$
with
$$
P=X(0,0), \quad Q=X(0, c).
$$

At $P$, the unit tangent vector to $P R$, directed toward $R$, is
$$
v_P=\cos \alpha \partial_s+\sin \alpha \partial_t .
$$

The great circle containing $P R$ is the intersection of $\mathbb{S}^2$ with the plane spanned by $P$ and $v_P$. Computing a normal to this plane gives the equation
$$
\sin s=\cot \alpha \tan t .
$$

Because $b=d(P, R) \leq \pi / 2$, latitude increases monotonically along $P R$, so this side remains on the principal inverse-sine branch. Its longitude is therefore
$$
s_{-}(t)=\arcsin (\cot \alpha \tan t) .
$$

Similarly, the side $Q R$ satisfies
$$
\sin (c-s)=\cot \beta \tan t,
$$
and, since $a=d(Q, R) \leq \pi / 2$, it also remains on its principal branch:
$$
s_{+}(t)=c-\arcsin (\cot \beta \tan t) .
$$

Consequently,
$$
L(t)=c-\arcsin (\cot \alpha \tan t)-\arcsin (\cot \beta \tan t) .
$$

Differentiating gives
$$
L^{\prime}(t)=-\sec ^2 t\left[\frac{\cot \alpha}{\sqrt{1-\cot ^2 \alpha \tan ^2 t}}+\frac{\cot \beta}{\sqrt{1-\cot ^2 \beta \tan ^2 t}}\right] .
$$
It remains to prove that the expression in brackets is nonnegative.
\par Step 2: A relation between the inner angles $\alpha$ and $\beta:$
Let $\gamma$ be the angle of $T$ at $R$. The spherical cosine and sine laws give
$$
\cot \alpha=\frac{\cos a-\cos b \cos c}{\sin b \sin c \sin \alpha}=\frac{\cos a-\cos b \cos c}{\sin a \sin b \sin \gamma} .
$$

Similarly,
$$
\cot \beta=\frac{\cos b-\cos a \cos c}{\sin a \sin b \sin \gamma} .
$$
Adding these two identities yields
$$
\cot \alpha+\cot \beta=\frac{(\cos a+\cos b)(1-\cos c)}{\sin a \sin b \sin \gamma} .
$$

The denominator is positive because the triangle is nondegenerate. Also, $1-\cos c>0 .$

Since $a, b \leq \pi / 2$, $\cos a \geq 0, \quad \cos b \geq 0 .$ Therefore,
$$
\cot \alpha+\cot \beta \geq 0 .
$$

The inequality is strict unless
$$
\cos a=\cos b=0,
$$
which is equivalent to $a=b=\frac{\pi}{2} .$

\par Step 3: sign of $L'(t):$ Fix $t \in(0, h)$, set $q=\tan t$, and define
$$
F_q(x)=\frac{x}{\sqrt{1-q^2 x^2}}
$$

On the relevant interval, $F_q$ is odd and strictly increasing. From $\cot{\alpha}+\cot{\beta} \geq 0$ we obtain $\cot{\beta} \geq-\cot{\alpha}.$ Because $F_q$ is increasing and odd,
$$
F_q(\cot{\beta}) \geq F_q(-\cot{\alpha})=-F_q(\cot{\alpha}) \text {. }
$$
Hence
$$
F_q(\cot{\alpha})+F_q(\cot{\beta}) \geq 0
$$
The derivative formula now gives $L^{\prime}(t) \leq 0$ as desired.

If $a$ and $b$ are not both $\pi / 2$, then
$$
\cot{\alpha}+\cot{\beta}>0
$$
so $\cot{\beta}>-\cot{\alpha}$. Strict monotonicity of $F_q$ then gives
$$
F_q(\cot{\alpha})+F_q(\cot{\beta})>0
$$
and therefore $L^{\prime}(t)<0.$
\par Step 4: The equality case:
Suppose
$$
a=b=\frac{\pi}{2}
$$

The spherical cosine law gives
$$
\cos a=\cos b \cos c+\sin b \sin c \cos \alpha.
$$
Since $a=b=\pi / 2$, this becomes
$$
0=\sin c \cos \alpha.
$$

Because $c>0$, we have $\sin c>0$, and hence
$$
\alpha=\frac{\pi}{2}.
$$

Similarly,
$$
\beta=\frac{\pi}{2}.
$$
Thus both $P R$ and $Q R$ are perpendicular to the great circle containing $P Q$. Their common endpoint $R$ is its pole. The longitude boundaries are constant:
$$
s_{-}(t)=0, \quad s_{+}(t)=c.
$$
Therefore,
$$
L(t) \equiv c, \quad L^{\prime}(t) \equiv 0.
$$

For the equality case, one can manually compute $\lambda_1(T;e)= \frac{\pi}{\beta}(\frac{\pi}{\beta}+1) \geq 6 = \mu_1^N(T)$ using separation of variables method and conclude $\mu_1^N(T) \leq \lambda_1(T;e).$

Finally, suppose that $\bar{T}$ lies in the interior of $T_{\pi/2}$. All coordinates of all points of $\bar{T}$ are then strictly positive. Hence, for distinct $X, Y \in \bar{T}$,
$$
X \cdot Y>0
$$
and therefore
$$
d(X, Y)<\frac{\pi}{2}.
$$
Thus the exceptional case cannot occur for any choice of edge, and
$$
L^{\prime}(t)<0
$$
for every side $e$ and every $0<t<h$.
\end{proof}

\subsubsection{Proof of Proposition \ref{Spherical Hatcher lemma 5.2}}
Assume for a contradiction that $\lambda_1(T;e)<\mu_1^N(T)$. Let $\varphi$ be the first mixed eigenfunction and $w$ defined in \eqref{eq:w}. Define the Hermitian bilinear form
$$
B_\mu(f, g)=\int_T\langle\nabla f, \nabla \bar{g}\rangle d A-\mu \int_T f \bar{g} d A.
$$

Then
$$
B_\mu(\varphi, \varphi)=\left(\lambda_1-\mu\right) \int_T \varphi^2 d A<0
$$
and our calculation above gives
$$
B_\mu(w, w)=B_\mu(w)<0 .
$$

The cross term vanishes:
$$
\begin{aligned}
B_\mu(\varphi, w) & =\int_T\langle\nabla \varphi, \nabla \bar{w}\rangle-\mu \int_T \varphi \bar{w} \\
& =\int_{\partial T} \varphi \partial_\nu \bar{w} d s =0.
\end{aligned}
$$
because $\varphi=0$ on $\partial T \backslash e$, while $\partial_\nu w=0$ on $e$. Therefore
$$
B_\mu(a \varphi+b w, a \varphi+b w)<0
$$
for every nonzero $(a, b) \in \mathbb{C}^2$. Since $\varphi$ and $w$ are linearly independent, their span is a two-dimensional trial space on which every Rayleigh quotient is strictly below $\mu$, contradicting the min-max characterization of the first positive Neumann eigenvalue. 
\qed

\begin{remark}
    As commented in \cite[Example~3.6]{Rohleder2017}, in general the inequality in \cref{Spherical Hatcher lemma 5.2} is not strict.
\end{remark}
Hatcher explicitly claims that if this lemma is proven, then all of his arguments afterwards extend to the positive curvature case. In particular,

\begin{proposition}[Spherical analogue of Lemma 7.1 \cite{Hatcher2025}]\label{sphericalLemma7.1}
    Let $T\subset \Sph$ be a geodesic triangle of diameter $D \leq \pi/2.$ Let $D, N \subset \partial T$ be unions of edges forming a partition of $\partial T$, and let $e \subset N$ be the closure of one Neumann edge. Let $X$ be a spherical Killing field tangent to the great circle containing $e$.
\begin{itemize}
\item[1.] Suppose $D \neq \varnothing$, and let $v$ be a first mixed eigenfunction with eigenvalue $\lambda_1^D(T)$. Set
$$
\phi=X v .
$$
Then $Z(\phi)$ cannot contain either a loop or an arc with both endpoints in $e$, unless
$$
\phi \equiv 0 \quad \text { on } \partial T \backslash e .
$$

\item[2.] Suppose $D=\varnothing$, and let $u$ be a first nonconstant Neumann eigenfunction with eigenvalue $\mu_1(T)$. Let
$$
\phi=u \quad \text { or } \quad \phi=X u .
$$
Then the same conclusion holds.
\end{itemize}
\end{proposition}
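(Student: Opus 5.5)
The plan is to follow Hatcher's proof of his Lemma 7.1 essentially verbatim. I will argue by contradiction, using the domain-monotonicity and minimax principles for the mixed problem together with the eigenvalue comparison in \cref{Spherical Hatcher lemma 5.2}.

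Suppose $Z(\phi)$ contains a loop, or an arc with both endpoints on $e$. In either case $Z(\phi)$ cuts off a subdomain $\Omega\subset T$ whose boundary consists of nodal arcs of $\phi$ and possibly a subarc of $e$. Since $X$ is a Killing field, it commutes with $\Delta$, so $-\Delta\phi=\lambda\phi$ in $T$, where $\lambda=\lambda_1^D(T)$ or $\lambda=\mu_1(T)$. Because $X$ is tangent to the great circle containing $e$, the flow of $X$ preserves that circle. Hence $X$ commutes with the normal derivative along $e$ (in Fermi coordinates $(t,s)$ about the great circle, $X=\partial_s$ is a rotation), and so $\partial_\nu\phi=0$ on $e$.

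Restricted to $\Omega$, the function $\phi$ is therefore an eigenfunction with Dirichlet condition on $\partial\Omega\setminus e$ and Neumann condition on $\partial\Omega\cap e$. Moreover, $\phi\ne0$ near the relevant parts of $\partial T\setminus e$ unless $\phi\equiv 0$ there.

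\emph{Case 1: $D\neq\varnothing$.} Extend $\phi|_\Omega$ by zero to get an $H^1$ function on $T$ that vanishes on $\mathcal D\subset \partial T\setminus e$. This is an admissible test function for the mixed problem with Rayleigh quotient $\lambda_1^D(T)$, so it is a first mixed eigenfunction. Unique continuation then forbids it from vanishing on the open set $T\setminus\overline\Omega$, unless $\Omega=T$. If $\Omega\neq T$ we have a contradiction. If $\Omega=T$, then $\phi$ vanishes on $\partial T\setminus e$, which is the stated exception.

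\emph{Case 2: $D=\varnothing$, $\phi=u$.} The nodal set of a first Neumann eigenfunction cannot contain a closed loop or an arc with both ends on one edge. Either configuration would produce a nodal domain $\Omega$ with $\partial\Omega\subset Z(u)\cup e$, and $u|_\Omega$ extended by zero gives an $H^1$ function whose Rayleigh quotient equals $\mu_1$. Combining it with $u$ restricted to the other nodal domain yields a mean-zero test function. By \cref{lem:space-form-first-Neumann-two-domains} and unique continuation this gives a contradiction, unless $u\equiv0$ on $\partial T\setminus e$, which is impossible for a Neumann eigenfunction.

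\emph{Case 2: $D=\varnothing$, $\phi=Xu$.} This is the main case. Here $\Omega$ gives $\lambda_1(\Omega;\ \partial\Omega\cap e\ \text{Neumann})\le\mu_1(T)$, and domain monotonicity for Dirichlet parts gives
\[
\lambda_1(T;e)\le \lambda_1(\Omega;\cdot)=\mu_1(T),
\]
with strict inequality unless $\Omega=T$. Combined with \cref{Spherical Hatcher lemma 5.2}, which gives $\mu_1(T)\le\lambda_1(T;e)$, this forces equality throughout, so $\Omega=T$ and $\phi\equiv0$ on $\partial T\setminus e$.

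The main obstacle is exactly this last step. The comparison is only non-strict (cf.\ the remark after \cref{Spherical Hatcher lemma 5.2}), so I must argue that equality of the eigenvalues forces $\phi|_\Omega$, extended by zero, to be a genuine mixed eigenfunction on $T$. Unique continuation then rules out $\Omega\subsetneq T$.

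A second technical point is checking that the nodal arcs are regular enough for the zero-extension to lie in $H^1$ with the correct boundary conditions near the vertices of $T$. I would handle this using Cheng's local structure of nodal sets \cite{Cheng1976}, as in the hyperbolic case.
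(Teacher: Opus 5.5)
\textbf{Overall.} You follow the same route as the paper, which defers to Hatcher's proof of his Lemma 7.1 with \cref{Spherical Hatcher lemma 5.2} in place of his Lemma 5.2. Your treatment of $\phi=Xv$ and of $\phi=Xu$ is the correct form of that argument. The ``main obstacle'' you flag is routine: an admissible function whose Rayleigh quotient equals $\lambda_1(T;e)$ is a minimizer, hence a mixed eigenfunction.

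\textbf{The gap is in the subcase $D=\varnothing$, $\phi=u$.} Put $u_1=u\chi_\Omega$ and $u_2=u\chi_{T\setminus\overline{\Omega}}$. Both have Rayleigh quotient $\mu_1(T)$, and they are orthogonal both in $L^2$ and in energy. So every element of their span has Rayleigh quotient exactly $\mu_1(T)$. Moreover, since $\int_T u=0$ and $u$ has a strict sign on $\Omega$, one has $\int u_2=-\int u_1\neq 0$. Hence the only mean-zero element of the span is a multiple of $u$ itself, and min--max returns only $\mu_1(T)\le\mu_1(T)$. Nor does \cref{lem:space-form-first-Neumann-two-domains} help. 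A single nodal arc with both endpoints on $e$, or a single interior nodal loop, cuts $T$ into exactly two nodal domains, which is fully consistent with Courant. So nodal counting cannot exclude these configurations for $u$; an eigenvalue comparison is indispensable here too.

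\textbf{How to fix it.} Run your $Xu$ argument verbatim for $\phi=u$. The zero extension $f$ of $u|_\Omega$ has vanishing trace on $\partial T\setminus e$; for an interior loop it even lies in $H^1_0(T)$. So $f$ is admissible for $\lambda_1(T;e)$ with Rayleigh quotient $\mu_1(T)$, which gives $\lambda_1(T;e)\le\mu_1(T)$. Then \cref{Spherical Hatcher lemma 5.2} forces equality, so $f$ is a first mixed eigenfunction. But $f$ vanishes on the other nodal domain of $u$, which is a nonempty open set because $u$ changes sign. This contradicts unique continuation. Thus \cref{Spherical Hatcher lemma 5.2} is needed for both choices of $\phi$ in part~2, not only for $Xu$. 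In part~1 no comparison is needed, as you correctly observed, because there the extension is admissible for the same mixed problem.
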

The proof is essentially identical to Hatcher's Lemma 7.1, and we just use \cref{Spherical Hatcher lemma 5.2} to replace his Lemma 5.2 when needed.

Consequently, all lemmas in \cite[section~7]{Hatcher2025} hold. We state an important lemma that says the multiplicity of the first nonzero Neumann eigenfunction is at most two:
\begin{lemma}[Spherical analogue of Hatcher's Corollary 7.4]
     The second Neumann eigenspace of a positive constant-curvature geodesic triangle $T$ with diameter $D\leq \pi/2$ has dimension at most two.
\end{lemma}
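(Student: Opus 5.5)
The plan is to follow Hatcher's proof of Corollary 7.4: a dimension count feeding into \cref{sphericalLemma7.1}. Let $E$ be the eigenspace of $\mu=\mu_1(T)$. Assume for contradiction that $\dim E\geq 3$. Because $E$ has dimension at least three, imposing two linear conditions still leaves a nonzero $u\in E$. The two conditions I would impose are $u(p)=0$ and $\nabla u(p)=0$ in a chosen direction, or $u=0$ at two chosen points. The goal is to choose these conditions so that the nodal set $Z(u)$ is forced to contain a loop, or an arc with both endpoints on a single edge $e$. Part~2 of \cref{sphericalLemma7.1} with $\phi=u$ forbids exactly this.

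First, I would fix an edge $e$ and a point $p$ in the relative interior of $e$. Then I choose $u\in E\setminus\{0\}$ with $u(p)=0$ and $\partial_\tau u(p)=0$, where $\tau$ is the tangent to $e$. The Neumann condition already gives $\partial_\nu u(p)=0$, so $p$ is a boundary critical zero. Reflecting evenly across the great circle containing $e$ turns $u$ into a local eigenfunction near $p$ that vanishes to order at least $2$. Cheng's local structure \cite{Cheng1976} then shows that at least two nodal arcs of $Z(u)$ leave $p$ into $T$. By \cref{lem:space-form-first-Neumann-two-domains}, $u$ has exactly two nodal domains. In Euler's formula from the proof of \cref{lem:space-form-three-side-sign}, a boundary vertex of degree $\geq 2$ already contributes $1$, so $N(u)=2$ forces $Z(u)$ to be a single arc through $p$ with no further branching. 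The two arcs leaving $p$ must therefore join or end elsewhere on $\partial T$, and I need to rule out each possibility.

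If both arcs end on $e$ again, or close up into a loop, \cref{sphericalLemma7.1}(2) is contradicted directly. The remaining case is that the two arcs end on two different edges, or at vertices. This is where the choice of $p$ has to be exploited. I would try the alternative normalization $u(p)=u(q)=0$ with $p,q\in e^\circ$ distinct, and combine it with the same two-nodal-domain constraint. If the nodal arcs from $p$ and $q$ do not connect to each other, they cut off at least three regions, which contradicts two nodal domains. If they do connect, they form an arc with both endpoints on $e$, which contradicts the lemma. If a nodal arc from $p$ ends at a vertex, I would treat vertices via the Neumann corner expansion, $u\sim r^{k\pi/\theta}\cos(k\pi\varphi/\theta)$, and count a vertex zero as contributing nodal structure in the same way.

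The main obstacle is the case analysis of where the nodal arcs terminate, in particular at vertices, together with making sure the two-point or critical-point normalization really produces a configuration that \cref{sphericalLemma7.1} excludes. To close the vertex case I would fall back on part~2 with $\phi=Xu$ for the Killing field $X$ tangent to $e$. Since $\dim E\geq 3$, the map $u\mapsto Xu$ has a kernel of dimension at most one on $E$, because only rotationally invariant eigenfunctions are annihilated by $X$. I would pick $u$ so that $Xu$ vanishes at two points of $e$. A nodal arc of $Xu$ joining these points, or a loop, again contradicts the lemma. The birectangular exceptional configuration, where $\phi\equiv0$ on $\partial T\setminus e$ is possible, is handled separately by the explicit separation of variables already used for the equality case.
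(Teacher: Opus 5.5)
Your two-point version is correct and is the Hatcher Corollary~7.4 argument the paper invokes. For $u\in E\setminus\{0\}$ with $u(p)=u(q)=0$, $p,q\in e^\circ$, the Euler formula from the proof of \cref{lem:space-form-three-side-sign} together with $N(u)=2$ forces $Z(u)$ to be a single arc from $p$ to $q$, which \cref{sphericalLemma7.1}(2) forbids (the alternative $u\equiv0$ on $\partial T\setminus e$ is impossible by unique continuation from vanishing Cauchy data). The same count already rules out your ``remaining case'' in the critical-zero version, since any endpoint other than $p$ (vertex or not) adds at least $\tfrac12$ and pushes $N(u)$ to $3$; so the vertex analysis, the birectangular treatment and the $Xu$ fallback are unnecessary, and the fallback would fail anyway because $Xu$ has no two-nodal-domain control.
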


\subsection{A deformation argument}
In \cite[section~9]{Hatcher2025}, Hatcher constructed a path using the Klein model from the flat Euclidean space to constant curvature space $M_{\kappa}$. As we only consider triangles of diameter $D\leq \pi/2$, we need to check the deformation step stays valid in our case. We now check the deformation stays inside the $\pi/2$ equilateral triangle. 

Let $T_1 \subset \mathbb{S}^2$ be a non-acute spherical geodesic triangle on the unit sphere such that $\operatorname{diam}\left(T_1\right) \leq \frac{\pi}{2}.$ Assume first that $T_1$ is not a triangle with two right-angled vertices. Choose a non-acute vertex $v_0$, move it to the origin of the spherical Klein model, and let
$$
\Delta=\operatorname{conv}\left\{0, x_1, x_2\right\} \subset \mathbb{R}^2
$$
be the fixed Euclidean triangle representing $T_1$.
For $0 \leq \kappa \leq 1$, equip $\Delta$ with Hatcher's Klein metric
$$
g_\kappa=\frac{d r^2}{\left(1+\kappa r^2\right)^2}+\frac{r^2}{1+\kappa r^2} d \theta^2.
$$

Denote the resulting geodesic triangle by $T_\kappa$. Then:
\begin{enumerate}
    \item the angle of $T_\kappa$ at the origin is independent of $\kappa$, so $T_\kappa$ remains non-acute;
\item  for every $0<\kappa \leq 1$,
$
\sqrt{\kappa} \operatorname{diam}_{g_\kappa}\left(T_\kappa\right) \leq \frac{\pi}{2};
$

\item the inequality is strict for $0<\kappa<1$.
\end{enumerate}
At curvature $\kappa>0$, rescale the metric by
$$
\tilde{g}_\kappa=\kappa g_\kappa .
$$

Then 
\begin{lemma}
$\tilde{g}_\kappa$ has curvature 1, and
$$
\operatorname{diam}_{\tilde{g}_\kappa}\left(T_\kappa\right)=\sqrt{\kappa} \operatorname{diam}_{g_\kappa}\left(T_\kappa\right) \leq \frac{\pi}{2} .
$$    
\end{lemma}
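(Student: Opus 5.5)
The plan has two parts. First I compute the curvature of the rescaled metric. Then I identify the distance function, which is where the real content of the diameter claim lies.

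For the curvature, note that rescaling a two-dimensional metric by a constant $c>0$ multiplies the Gaussian curvature by $c^{-1}$. It therefore suffices to check that $g_\kappa$ has constant curvature $\kappa$. I would do this by the substitution
\[
\rho=\frac{1}{\sqrt\kappa}\arctan(\sqrt\kappa\, r).
\]
With this choice, $d\rho=dr/(1+\kappa r^2)$. Moreover $\tan(\sqrt\kappa\rho)=\sqrt\kappa r$, so
\[
\frac{r^2}{1+\kappa r^2}=\frac{1}{\kappa}\sin^2(\sqrt\kappa\rho)=\sn_\kappa^2(\rho).
\]
Hence $g_\kappa=d\rho^2+\sn_\kappa^2(\rho)\,d\theta^2$. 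This is the geodesic polar form of $\MM^2_\kappa$, so $K\equiv\kappa$. Then $\tilde g_\kappa=\kappa g_\kappa$ has curvature $1$. Equivalently, setting $\tilde\rho=\sqrt\kappa\rho$ gives $\tilde g_\kappa=d\tilde\rho^2+\sin^2\tilde\rho\,d\theta^2$ explicitly.

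For the diameter identity, I use the fact that lengths of curves scale by $\sqrt\kappa$ under $g\mapsto\kappa g$. So $d_{\tilde g_\kappa}=\sqrt\kappa\, d_{g_\kappa}$ pointwise. Taking the supremum over pairs of points of $T_\kappa$ gives the equality of diameters. The remaining inequality $\le\pi/2$ is then exactly item (2) of the preceding list.

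The main obstacle is this item (2), if it must be justified rather than assumed. My approach would be as follows. In the coordinates above, the map $(r,\theta)\mapsto(\tilde\rho,\theta)$ sends the Euclidean triangle $\Delta$ to a spherical triangle with vertex $0$ at the pole. Straight lines of $\Delta$ are geodesics in every $g_\kappa$ (the Klein/gnomonic property). So $T_\kappa$ is the spherical triangle with vertices at the pole and at $\tilde\rho_i=\arctan(\sqrt\kappa |x_i|)$, with the same angle at the pole.

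Since $\arctan$ is increasing, the side lengths from the pole increase with $\kappa$. At $\kappa=1$ they equal those of $T_1$, so they are $\le\pi/2$, strictly so for $\kappa<1$. The opposite side is controlled by the spherical law of cosines,
\[
\cos c=\cos\tilde\rho_1\cos\tilde\rho_2+\sin\tilde\rho_1\sin\tilde\rho_2\cos\gamma .
\]
Here $\gamma\ge\pi/2$ is fixed, so $\cos\gamma\le0$, and both terms are monotone in $\tilde\rho_i\in[0,\pi/2]$. It follows that $\cos c$ is nonincreasing in $\kappa$, and hence $c$ is maximized at $\kappa=1$.

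Finally, by \cref{lem:space-form-opposite-side-distance} the diameter of a triangle in the hemisphere is its longest side. This gives $\operatorname{diam}(T_\kappa)\le\operatorname{diam}(T_1)\le\pi/2$ after rescaling, which is the claim.
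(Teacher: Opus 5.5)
Your curvature computation and your side-length monotonicity are correct and agree with the paper. With $\cos\tilde\rho_i=(1+\kappa|x_i|^2)^{-1/2}$, your law of cosines is exactly the paper's formula for $\cos d_{\tilde g_\kappa}(x_1,x_2)$. Your substitution $\rho=\kappa^{-1/2}\arctan(\sqrt\kappa\,r)$ also verifies the curvature claim, which the paper leaves implicit in the realization map $\Phi_\kappa$.

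The gap is the final step, from side lengths to diameter. You cannot cite \cref{lem:space-form-opposite-side-distance} there. Its hypothesis is $\operatorname{diam}(T)\le\pi/(2\sqrt K)$, which for $T_\kappa$ is precisely the conclusion you want. Its conclusion only bounds the distance from a vertex to points of the opposite side, not the distance between two arbitrary points of $T_\kappa$. The statement you attribute to it, that the diameter of a triangle in a hemisphere is its longest side, is also false as phrased. Take a vertex $A$ nearly antipodal to a short side $BC$, all inside an open hemisphere. Then $\cos AB,\cos AC<0$, and the coefficients in \eqref{eq:space-form-geodesic-interpolation} sum to more than $1$. So the midpoint $M$ of $BC$ satisfies $d(A,M)>AB=AC$. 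The right input is that all three sides of $T_\kappa$ are at most $\pi/2$, which your monotonicity argument already gives.

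There are two ways to close the gap.
\begin{enumerate}
\item Use the paper's argument. Since the sides of $T_\kappa$ are at most $\pi/2$, the lifted vertex vectors $\Phi_\kappa(0),\Phi_\kappa(x_1),\Phi_\kappa(x_2)$ have pairwise nonnegative inner products. Every point of $T_\kappa$ is the normalization of a nonnegative combination of them. Hence any two points have nonnegative inner product, i.e.\ lie at distance at most $\pi/2$.
\item Use the interpolation identity behind \eqref{eq:space-form-geodesic-interpolation}, which holds for any base point $P$. It shows that $d(P,\cdot)\le m$ along a geodesic segment whose endpoints are within $m\le\pi/2$ of $P$. For a triangle $ABC$ with all sides at most $m\le\pi/2$, apply it three times. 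First, along $BC$ with $P=A$. Next, write an arbitrary $Y$ on a segment $AY'$ with $Y'\in BC$, and apply it along $AY'$ with $P$ a vertex. Finally, apply it along $BC$ and then $AY'$ with $P$ an arbitrary point.
\end{enumerate}
The second route gives your sharper claim $\operatorname{diam}(T_\kappa)=\max(\text{sides})\le\operatorname{diam}(T_1)$. The paper's cone argument gives only $\le\pi/2$, but in one line.
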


\begin{proof}
    Let the non-acute vertex be the Klein origin, and let the other vertices be $x_1, x_2 \in \mathbb{R}^2$. For the rescaled metric
$$
\tilde{g}_\kappa=\kappa g_\kappa,
$$
the unit-sphere realization is
$$
\Phi_\kappa(x)=\frac{(1, \sqrt{\kappa} x)}{\sqrt{1+\kappa|x|^2}} .
$$

Consequently,
$$
d_{\tilde{g}_\kappa}\left(0, x_i\right)=\arctan \left(\sqrt{\kappa}\left|x_i\right|\right),
$$
which is strictly increasing in $\kappa$.
If $\gamma \geq \pi / 2$ is the angle at the origin, then $x_1 \cdot x_2 \leq 0$, and
$$
\cos d_{\tilde{g}_\kappa}\left(x_1, x_2\right)=\frac{1+\kappa x_1 \cdot x_2}{\sqrt{\left(1+\kappa\left|x_1\right|^2\right)\left(1+\kappa\left|x_2\right|^2\right)}} .
$$

The right-hand side is strictly decreasing in $\kappa$, so this side length is also strictly increasing. Thus all three side lengths at $0<\kappa<1$ are strictly smaller than their values at $\kappa=1$.

 In Klein coordinates, every point of the triangle lifts to the normalization of a positive linear combination of the three lifted vertex vectors. Since all pairwise vertex inner products are nonnegative, any two such positive combinations have nonnegative inner product. Hence every two points of the triangle are at spherical distance at most $\pi / 2$.
\end{proof}

Along the Klein path, every positive-curvature triangle, after constant rescaling to curvature 1, has diameter at most $\pi / 2$. \cref{Spherical Hatcher lemma 5.2} therefore supplies the analogue of Hatcher's Lemma 5.2 at every positive-curvature parameter. Consequently, the proof of Hatcher's Lemma 7.1 and Corollaries 7.2-7.4 applies to every triangle on the path. The spectral continuation and critical-point arguments in Hatcher's Sections 9, 10, and 12 then apply verbatim, as asserted in Hatcher's Theorem 1.5. We have therefore proved \cref{sphericalHatcherTheorem}.

\section{Antisymmetry and eigenvalue monotonicity for isosceles triangles}\label{Section: lower bound isoceles}
In this section we prove \cref{Antisymmetry of isosceles triangles Theorem} and that the first nonzero Neumann eigenvalue of an isosceles triangle of diameter $\pi/2$ is at least 6. The case of birectangular isosceles triangles can be done by the exceptional case in \cref{The Main Theorem}. We now just deal with the case in which the base realizes diameter. We restate \cref{Antisymmetry of isosceles triangles Theorem} here:
\begin{theorem}\label{thm: antisymmetry of isosceles eigenfunction}
    Assume $T$ is non-birectangular isosceles and diameter $D=\pi/2$. Then the first nonzero Neumann eigenfunction $u$ of $T$ is antisymmetric about its median.
\end{theorem}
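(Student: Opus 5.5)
The plan is to obtain antisymmetry from the Hot Spot Theorem (\cref{sphericalHatcherTheorem}) together with the reflection symmetry of $T$. The eigenvalue is then forced to be simple, and the only remaining question is whether the eigenfunction is symmetric or antisymmetric; monotonicity along the Killing field will rule out the symmetric case.

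\emph{Step 1: setup and non-acuteness.} Write $T=\triangle ABC$ with apex $A$ and $AB=AC=b$. Since $T$ is non-birectangular and $\diam(T)=\pi/2$, \cref{lem:space-form-opposite-side-distance} together with the case discussion above shows that the diameter is realized by the base. So $BC=\pi/2$ and $b<\pi/2$; if $b=\pi/2$, then $A$ would be joined to $B$ and $C$ by sides of length $\pi/2$, and $T$ would be birectangular. The spherical cosine law at $A$ reads $0=\cos(BC)=\cos^2 b+\sin^2 b\cos\angle A$, which gives $\cos\angle A=-\cot^2 b<0$. Hence the apex angle is obtuse and $T$ is non-acute.

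\emph{Step 2: apply the Hot Spot Theorem.} We now have a non-acute spherical triangle with $\diam(T)\le\pi/2$ outside the exceptional birectangular family, so \cref{sphericalHatcherTheorem} applies. It gives three things:
\begin{enumerate}
\item $\mu_1^N(T)$ is simple.
\item The extrema of $u$ sit at $B$ and $C$; here the base $e=BC$ is the longest side, since $b<\pi/2$.
\item After possibly changing the sign of $u$, we have $X_e u>0$ in $T$, where $X_e$ is the Killing field tangent to the great circle through $B,C$. This field generates rotation about the pole $O$ of that great circle.
\end{enumerate}
Let $\sigma$ be the reflection of $\mathbb S^2$ across the great circle containing the median from $A$. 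Then $\sigma$ preserves $T$ and $\Delta$ and the Neumann condition, so $u\circ\sigma$ is again a first eigenfunction. By simplicity, $u\circ\sigma=\pm u$; that is, $u$ is either symmetric or antisymmetric.

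\emph{Step 3: exclude the symmetric case.} The key geometric point is that the median great circle passes through both $A$ and the pole $O$. Indeed, it is the perpendicular bisector of $BC$, hence it is orthogonal to the great circle of $e$ and therefore contains $O$. Consequently $\sigma$ is a reflection through a plane containing the rotation axis of $X_e$, so it conjugates the rotation flow to its inverse:
\[
\sigma_*X_e=-X_e .
\]
If $u\circ\sigma=u$, differentiating gives $(X_e u)(\sigma p)=-(X_e u)(p)$ for $p\in T$. This says $X_e u$ takes both signs (or vanishes) on $T$, contradicting $X_e u>0$ in $T$. Hence $u\circ\sigma=-u$, so $u$ is antisymmetric about the median.

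\emph{Main obstacle.} The step needing the most care is Step 1: verifying that a non-birectangular isosceles triangle of diameter $\pi/2$ really lies in the scope of \cref{sphericalHatcherTheorem}, i.e.\ that it is non-acute and not exceptional, and identifying which side is the distinguished longest side $e$. I would handle this with the cosine-law computation above. A second point to check is the sign relation $\sigma_*X_e=-X_e$. I would verify it directly in the coordinates $(t,s)$ of \cref{section:Spherical Hatcher Theorem}, with $e$ on the equator and the median on the meridian $s=c/2$. There $X_e=\partial_s$ and $\sigma(t,s)=(t,c-s)$, so $\sigma_*\partial_s=-\partial_s$ is immediate.
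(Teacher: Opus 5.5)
Your proof is correct, and it shares the paper's skeleton. Non-birectangularity forces the base to be the unique side of length $\pi/2$, and the cosine law makes the apex obtuse. The Hot Spot Theorem then gives simplicity, hence $u\circ\sigma=\pm u$.

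The only real difference is how you exclude the symmetric case.

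\begin{itemize}
\item The paper uses item 2 of the theorem. If $u$ were symmetric about the median, its derivative normal to the median would vanish at the base midpoint $M$. Since the median meets the base orthogonally, this is the tangential derivative along the base. Combined with the Neumann condition, it makes $M$ a non-vertex critical point, which item 2 forbids.
\item You use item 3 instead. That needs some extra bookkeeping, which you handle correctly. The base must be the unique longest side, so the theorem's $X_e$ is tangent to $BC$. That $X_e$ is, up to scale, the unique Killing field tangent to $e$, namely rotation about the pole of the base's great circle. The median great circle is the perpendicular bisector of $BC$, so it passes through that pole, and hence $\sigma_*X_e=-X_e$.
\end{itemize}

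The paper's route needs no identification of the longest side or of $X_e$. Yours shows a bit more: symmetry would force $X_e u$ to vanish along the entire median inside $T$, not just a gradient to vanish at one boundary point.
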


\begin{proof}
    Let $OA$ be its base and $s=|AB|=|OB|$ be its side. Then $T$ being a non-birectangular isosceles triangle implies the base $OA$ realizes its diameter: $|OA|=D=\pi/2,$ and $D/2<s<D.$ The spherical cosine law implies $T$ must be an obtuse triangle:
    \[0=\cos D=\cos ^2 s+\sin ^2 s \cos \angle ABO\]
    and hence $\cos \angle ABO<0.$
    
    Let $M$ be the midpoint of $OA.$ Then $T$ is symmetric about its median $BM$. By \cref{sphericalHatcherTheorem} $\mu_1^N(T)$ is simple, and hence its corresponding eigenfunction $u$ is either symmetric or antisymmetric about $BM$. Let $\mu_s, u_s$ be the first nonzero symmetric Neumann eigenvalue and eigenfunction, and $\mu_a$ be the antisymmetric counterpart. We claim:
    \begin{claim}
        $\mu_a<\mu_s.$
    \end{claim}
    If not, the first Neumann eigenfunction would be $u_s$. Since $u_s$ is symmetric about the median $BM$, its normal derivative along $BM$ vanishes. In particular, at $M$, its normal derivative along $BM$ is actually the tangential derivative along the base $OA$, hence
\[\partial_\tau u_s(M)=0 .\] Also the normal derivative of $u_s$ vanishes on the side $OA.$ Hence it implies $M$ is a critical point at the interior of an edge, contradicting \cref{sphericalHatcherTheorem}. It follows immediately that $\mu=\mu_a$ and the first Neumann eigenfunction is the antisymmetric one.
\end{proof}

To prove the lower bound $\mu>6$ for such $T$, we can in fact show a stronger statement:
\begin{theorem}[Strict Monotonicity of Isosceles Family along deformation]
    For $0<\beta \leq \pi / 2$, let $T_\beta=\triangle O A B$ be the isosceles spherical triangle satisfying

$$|O A|=\frac{\pi}{2}, \quad |O B|=|A B|, \quad \angle A O B=\beta .
$$

Then the function

$$
\beta \longmapsto \mu_1^N\left(T_\beta\right)
$$

is strictly decreasing on $(0, \pi / 2)$. Consequently,

$$
\mu_1^N\left(T_\beta\right)>6, \quad 0<\beta<\frac{\pi}{2},
$$

and
$
\lim _{\beta \uparrow \pi / 2} \mu_1^N\left(T_\beta\right)=6 .
$
\end{theorem}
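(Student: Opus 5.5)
The plan is to reduce $\mu_1^N(T_\beta)$ to a mixed eigenvalue on half of the triangle, and then to show that this mixed eigenvalue is monotone in $\beta$ by a change of variables that turns the $\beta$-dependence into a monotone dependence of a Rayleigh quotient.

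\emph{Step 1: reduction to a mixed problem.} For $0<\beta<\pi/2$, the triangle $T_\beta$ is non-birectangular, isosceles, and has diameter $|OA|=\pi/2$.
\begin{itemize}
\item By \cref{thm: antisymmetry of isosceles eigenfunction}, its first Neumann eigenfunction is antisymmetric about the median $BM$, where $M$ is the midpoint of $OA$.
\item Hence $\mu_1^N(T_\beta)=\lambda(\beta)$, the first eigenvalue of the half triangle $H_\beta=\triangle OMB$ with Dirichlet condition on $BM$ and Neumann condition on $OM\cup OB$.
\item Indeed, the restriction of an antisymmetric eigenfunction is admissible for this mixed problem. Conversely, the odd reflection of a mixed eigenfunction is a Neumann eigenfunction of $T_\beta$ orthogonal to constants.
\end{itemize}

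\emph{Step 2: description of the family $H_\beta$.} Take geodesic polar coordinates $(r,\theta)$ centered at $O$ with $OA$ along $\theta=0$.
\begin{itemize}
\item The side $BM$ lies on the great circle perpendicular to $OA$ at $M$, with $|OM|=\pi/4$.
\item By the radial-graph formula \eqref{eq:space-form-thin-radial-graph} with $K=1$, this great circle has equation $\cot R(\theta)=\cot(\pi/4)\cos\theta=\cos\theta$.
\item Therefore
\[
H_\beta=\{(r,\theta):0<\theta<\beta,\ 0<r<R(\theta)\},
\]
where $R(\theta)=\operatorname{arccot}(\cos\theta)$ is a \emph{fixed} function, strictly increasing on $[0,\pi/2]$. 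The side $OB$ is the ray $\theta=\beta$.
\end{itemize}

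\emph{Step 3: the key monotonicity.} Rescale the angle by $\theta=\beta\varphi$ with $\varphi\in(0,1)$. The Rayleigh quotient for $\lambda(\beta)$ becomes
\[
\frac{\int_0^1\int_0^{R(\beta\varphi)}\bigl(v_r^2+\beta^{-2}\sin^{-2}r\,v_\varphi^2\bigr)\sin r\,dr\,d\varphi}{\int_0^1\int_0^{R(\beta\varphi)}v^2\sin r\,dr\,d\varphi}.
\]
The admissible functions vanish on the curved side $r=R(\beta\varphi)$ and are free on $\varphi=0,1$. Fix $\beta<\beta'$.
\begin{itemize}
\item Since $R$ is increasing, the $(r,\varphi)$-domain for $\beta$ is contained in that for $\beta'$.
\item Because the outer side carries the Dirichlet condition, extending a $\beta$-admissible function by zero gives a $\beta'$-admissible one.
\item The coefficient $\beta^{-2}$ of the angular term decreases as $\beta$ increases.
\end{itemize}
Plugging the first mixed eigenfunction for $\beta$, extended by zero, into the quotient for $\beta'$ gives $\lambda(\beta')\le\lambda(\beta)$.

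\emph{Step 4: strictness.} Equality would force the zero-extended function to be a minimizer, hence a mixed eigenfunction on the larger domain. That function vanishes on an open set, which contradicts unique continuation. This gives strict decrease on $(0,\pi/2)$.

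\emph{Step 5: the endpoint $\beta=\pi/2$.} In the rescaled form the coefficients depend continuously on $\beta$, so $\lambda(\beta)$ is continuous up to $\beta=\pi/2$.
\begin{itemize}
\item At $\beta=\pi/2$ we get $R(\pi/2)=\pi/2$ and $B$ is the pole, so $T_{\pi/2}$ is the birectangular octant.
\item The quadratic harmonic $x^2-y^2$, restricted suitably, is antisymmetric about the median. It is a Neumann eigenfunction with eigenvalue $6$, so $\lambda(\pi/2)\le 6$.
\item By \cref{The Main Theorem}, $6=\mu_1\le\lambda(\pi/2)$, hence $\lambda(\pi/2)=6$.
\end{itemize}
Continuity together with strict monotonicity then yields $\mu_1^N(T_\beta)>6$ for $\beta<\pi/2$ and $\lim_{\beta\uparrow\pi/2}\mu_1^N(T_\beta)=6$.

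\emph{Expected obstacle.} The main point to check carefully is Step 1: the identification of $\mu_1^N(T_\beta)$ with the mixed half-triangle eigenvalue. This relies on the simplicity and antisymmetry coming from \cref{sphericalHatcherTheorem}. A secondary check is the endpoint value $\lambda(\pi/2)=6$, which needs the explicit octant eigenfunction to be antisymmetric rather than only the symmetric radial one.
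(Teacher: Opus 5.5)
Your proof is correct. After a common first step, it takes a genuinely different and more elementary route than the paper.

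\textbf{Shared setup.} Both arguments use the antisymmetry theorem (hence the spherical hot-spots theorem) to identify $\mu_1^N(T_\beta)$ with the first mixed eigenvalue of the half-triangle $\triangle OMB$. Both also use that $BM$ lies on the fixed curve $\cot r=\cos\theta$, so only the Neumann ray $OB$ moves.

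\textbf{The paper's route.} The paper differentiates in $\beta$. A mixed Hadamard formula gives $\mu'(\beta)=\int_0^{\ell_\beta}(y'^2-\mu y^2)\sin s\,ds$. The paper then proves $\partial_\theta v_\beta>0$ using the Killing field $\partial_\theta$, Hopf on $BM$, and the gap between the mixed and pure Dirichlet eigenvalues. Hopf on $OB$ then gives $v_{\beta,\theta\theta}<0$ there, and an integration by parts shows $\mu'<0$.

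\textbf{Your route.} The substitution $\theta=\beta\varphi$ freezes the moving Neumann side at $\varphi=1$. Increasing $\beta$ then only enlarges the rescaled domain $\Omega_\beta=\{0<\varphi<1,\ 0<r<R(\beta\varphi)\}$ across its Dirichlet side, since $R$ is increasing. It also lowers the angular coefficient $\beta^{-2}$. Zero extension plus unique continuation then give $\lambda(\beta')<\lambda(\beta)$ for every pair $\beta<\beta'\le\pi/2$.

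\textbf{What each buys.} Your argument avoids proving differentiability of $\mu(\beta)$. It also avoids justifying a Hadamard formula on a domain with a Dirichlet--Neumann corner at $B$, and the maximum-principle analysis of $\partial_\theta v_\beta$. It uses nothing about $R$ beyond monotonicity. In exchange, the paper's route gives an explicit derivative formula and the qualitative fact $\partial_\theta v_\beta>0$ about the eigenfunction.

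\textbf{The endpoint.} Your treatment of $\beta=\pi/2$ is more explicit than the paper's appeal to convergence to the octant, where $6$ is a double Neumann eigenvalue. The harmonic $X_1^2-X_3^2$ is odd under reflection in the plane $\{X_1=X_3\}$ containing $BM$. It therefore pins the mixed eigenvalue at $\beta=\pi/2$ to exactly $6$.

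\textbf{One sentence to strengthen.} The continuity claim in Step 5 needs more than ``the coefficients depend continuously on $\beta$'', because the domain $\Omega_\beta$ moves as well. By monotonicity you only need $\limsup_{\beta\uparrow\pi/2}\lambda(\beta)\le\lambda(\pi/2)$. This follows by testing with admissible functions on $\Omega_{\pi/2}$ that vanish near its Dirichlet side, which form a dense class. Alternatively, pull back through the radial stretch $r\mapsto r\,R(\pi\varphi/2)/R(\beta\varphi)$.
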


\begin{proof}
  Choose geodesic polar coordinates $(r, \theta)$ centered at $O$, with the side $O A$ given by $\theta=0$:

\begin{equation}\label{geodesicpolarcoordinates}
X(r, \theta)=(\sin r \cos \theta, \sin r \sin \theta, \cos r) .
\end{equation}

The midpoint of $O A$ is $M=\frac{1}{\sqrt{2}}(1,0,1).$ The great circle containing the symmetry axis $B M$ lies in the plane $\{X_1=X_3\}.$ In polar coordinates, its equation is therefore

$$
\sin r \cos \theta=\cos r
$$

or equivalently

$$
\cot r=\cos \theta .
$$

Define $\rho(\theta):= \cot^{-1}(\cos \theta) .$ Then

$$
B=X\left(\ell_\beta, \beta\right), \quad \ell_\beta:=O B=\rho(\beta),
$$

and

$$
\cot \ell_\beta=\cos \beta, \quad \ell_\beta=\arctan (\sec \beta) .
$$  
Let $U_\beta:=\triangle O M B$ be one half of $T_\beta$. In these coordinates,

$$
U_\beta=\{(r, \theta): 0<\theta<\beta, 0<r<\rho(\theta)\} .
$$

Its boundary consists of

$$
O M=\{\theta=0\}, \quad O B=\{\theta=\beta\}, \quad B M=\{r=\rho(\theta)\} .
$$

Notice that the geodesics containing $O M$ and $B M$ are independent of $\beta$. As $\beta$ increases, only the Neumann side $O B$ moves.
By \cref{thm: antisymmetry of isosceles eigenfunction}, the anti-symmetry of a first Neumann eigenfunction across $B M$, $\mu(\beta):=\mu_1^N\left(T_\beta\right)$ is the first eigenvalue of the mixed problem on $U_\beta$:

$$
\begin{cases}-\Delta v_\beta=\mu(\beta) v_\beta & \text { in } U_\beta, \\ v_\beta=0 & \text { on } B M, \\ \partial_\nu v_\beta=0 & \text { on } O M \cup O B .\end{cases}
$$

The maximum principle implies that $v_\beta$ does not change its sign. Choose the normalization

$$
\int_{U_\beta} v_\beta^2 d A=1
$$
and the sign so that $v_\beta>0$ in $U_\beta .$ The first mixed eigenvalue is simple, so $\mu(\beta)$ and $v_\beta$ depend differentiably on $\beta$. Parametrize $O B$ by arclength from $O$:

$$
\gamma_\beta(s)=X(s, \beta), \quad 0<s<\ell_\beta .
$$

Differentiating with respect to $\beta$,

$$
\partial_\beta \gamma_\beta(s)=\partial_\theta X(s, \beta).
$$

Along the side $\theta=\beta$, the outward unit normal vector to $\partial U_\beta$ is $\nu=\frac{1}{\sin s} \partial_\theta,$ hence the outward normal velocity is

$$
V_\nu(s)=\left\langle\partial_\beta \gamma_\beta(s), \nu\right\rangle=\left\langle\partial_\theta, \frac{1}{\sin s} \partial_\theta\right\rangle=\sin s .
$$

The sides $O M$ and $B M$ have zero normal velocity.
The mixed Hadamard formula (see \cite[Theorem~A.1]{AnoopAshokKesavan2021} and \cite[Section~3]{Berge2024}) therefore gives

$$
\mu^{\prime}(\beta)=\int_{O B}\left(\left|\nabla_\tau v_\beta\right|^2-\mu(\beta) v_\beta^2\right) V_\nu d s.
$$
Writing $y(s):=v_\beta(s, \beta),$ we obtain

$$
\mu^{\prime}(\beta)=\int_0^{\ell_\beta}\left(y^{\prime}(s)^2-\mu(\beta) y(s)^2\right) \sin s d s.
$$

Set

$$
I_\beta:=\int_0^{\ell_\beta}\left(y^{\prime}(s)^2-\mu(\beta) y(s)^2\right) \sin s d s.
$$

It remains to show $I_\beta<0 .$ 

We first show 
\begin{lemma}[Angular Derivative]\label{positivity of the angular derivative q}
    $q:=\partial_\theta v_\beta >0 .$
\end{lemma}
\begin{proof}[Proof of \cref{positivity of the angular derivative q}]
    The vector field $\partial_\theta$ is generated by rotations about $O$, hence it is a Killing field. Therefore it commutes with the spherical Laplacian:

$$
\left[\Delta, \partial_\theta\right]=0
$$

and consequently

$$
-\Delta q=\mu(\beta) q \quad \text { in } U_\beta .
$$

On the radial sides $O M$ and $O B$, the Neumann condition for $v_\beta$ gives $q=0 .$ We next determine the sign of $q$ on $B M$. Since

$$
B M=\{r=\rho(\theta)\}, \quad \cot \rho(\theta)=\cos \theta,
$$

differentiation gives

$$
-\csc ^2 \rho(\theta) \rho^{\prime}(\theta)=-\sin \theta,
$$

and hence

$$
\rho^{\prime}(\theta)=\sin ^2 \rho(\theta) \sin \theta>0 .
$$

The Dirichlet condition on $B M$ is

$$
v_\beta(\rho(\theta), \theta)=0 .
$$

Differentiating tangentially,

$$
\left(v_\beta\right)_r \rho^{\prime}(\theta)+\left(v_\beta\right)_\theta=0 .
$$

Thus

$$
q=-\rho^{\prime}(\theta)\left(v_\beta\right)_r \quad \text { on } B M .
$$

Because $v_\beta>0$ in $U_\beta$ and vanishes on $B M$, the Hopf boundary lemma gives

$$
\partial_\nu v_\beta<0 \quad \text { on the open side } B M .
$$

The domain lies on the side $r<\rho(\theta)$, so the outward normal vector has positive $r$-component. It follows that

$$
\left(v_\beta\right)_r<0 \quad \text { on } B M .
$$

Since $\rho^{\prime}>0$,

$$
q>0 \quad \text { on the open side } B M .
$$

We claim that $q \geq 0$ in $U_\beta.$ Suppose otherwise and let $q^{-}:=\max \{-q, 0\} .$ Since $q=0$ on $O M \cup O B$ and $q>0$ on $B M$, one has $q^{-} \in H_0^1\left(U_\beta\right) .$ Testing
$$
-\Delta q=\mu(\beta) q
$$
against $q^{-}$gives
$$
\int_{U_\beta}\left|\nabla q^{-}\right|^2 d A=\mu(\beta) \int_{U_\beta}\left(q^{-}\right)^2 d A.
$$

On the other hand,
$$
\mu(\beta)<\lambda_1^D\left(U_\beta\right),
$$
where $\lambda_1^D\left(U_\beta\right)$ is the first pure Dirichlet eigenvalue. Indeed, the mixed admissible space strictly contains $H_0^1\left(U_\beta\right)$; equality would force a first Dirichlet eigenfunction also to satisfy the Neumann condition on the open sides $O M$ and $O B$, contradicting the Hopf lemma.
Thus $q^{-} \not \equiv 0$ would have Dirichlet Rayleigh quotient
$$
\frac{\int\left|\nabla q^{-}\right|^2}{\int\left(q^{-}\right)^2}=\mu(\beta)<\lambda_1^D\left(U_\beta\right)
$$
which is impossible. Hence $q \geq 0 .$ Since $q \not \equiv 0$, the strong maximum principle yields
$$
q>0 \quad \text { in } U_\beta .
$$
The lemma follows.
\end{proof}
Along the side $O B=\{\theta=\beta\}$, one has $q=0$, while $q>0$ inside. Hopf's lemma therefore gives
$$
\partial_\nu q<0 \quad \text { on the open side } O B .
$$

Because $\partial_\nu=\frac{1}{\sin s} \partial_\theta$ there,
$$
q_\theta(s, \beta)<0 .
$$

Equivalently,
$$
v_{\beta, \theta \theta}(s, \beta)<0, \quad 0<s<\ell_\beta.
$$
We now determine the sign of the integrand in the Hadamard integral $I_{\beta}$. Along $O B$, the eigenvalue equation becomes
$$
y^{\prime \prime}+\cot s y^{\prime}+\frac{1}{\sin ^2 s} v_{\beta, \theta \theta}(s, \beta)+\mu(\beta) y=0
$$
Hence
$$
y^{\prime \prime}+\cot s y^{\prime}+\mu(\beta) y=-\frac{v_{\beta, \theta \theta}(s, \beta)}{\sin ^2 s} .
$$

Integrating by parts,
$$
\begin{aligned}
I_\beta & =\int_0^{\ell_\beta}\left(y^{\prime 2}-\mu(\beta) y^2\right) \sin s d s \\
& =-\int_0^{\ell_\beta} y\left(y^{\prime \prime}+\cot s y^{\prime}+\mu(\beta) y\right) \sin s d s
\end{aligned}
$$

The boundary term vanishes: at $s=0$ because $\sin s=0$, and at $s=\ell_\beta$ because $B \in B M$ and therefore
$$
y\left(\ell_\beta\right)=v_\beta(B)=0 .
$$

Substituting the equation for $y$,
$$
I_\beta=\int_0^{\ell_\beta} \frac{y(s) v_{\beta, \theta \theta}(s, \beta)}{\sin s} d s
$$
Now
$$
y(s)>0 \quad \text { for } 0<s<\ell_\beta,
$$
because $v_\beta$ is the positive first mixed eigenfunction, while
$$
v_{\beta, \theta \theta}(s, \beta)<0 .
$$

Therefore
$$
I_\beta<0 .
$$

Since
$
\mu^{\prime}(\beta)=I_\beta,
$
we conclude that
$$
\mu^{\prime}(\beta)<0 \quad \text { for every } 0<\beta<\frac{\pi}{2}.
$$

Finally, as $\beta \uparrow \pi / 2$, the triangle $T_\beta$ converges to the equilateral spherical triangle with side length $\pi / 2$, whose first nonzero Neumann eigenvalue is 6. Hence
$$
\mu_1^N\left(T_\beta\right)=6-\int_\beta^{\pi / 2} \mu^{\prime}(\sigma) d \sigma>6.
$$
This proves both strict monotonicity and the desired lower bound along the isosceles family.
\end{proof}

\printbibliography

@article{Kroger1992SpectralGap,
  author    = {Kröger, Pawel},
  title     = {On the spectral gap for compact manifolds},
  journal   = {J. Differential Geom.},
  volume    = {36},
  number    = {2},
  pages     = {315--330},
  year      = {1992},
  publisher = {Lehigh University},
  url       = {https://projecteuclid.org/journals/journal-of-differential-geometry/volume-36/issue-2/On-the-spectral-gap-for-compact-manifolds/10.4310/jdg/1214448744.full}
}

@article{LaugesenSiudeja2010,
  author  = {Laugesen, Richard Snyder and Siudeja, Bart{\l}omiej Andrzej},
  title   = {Minimizing {Neumann} Fundamental Tones of Triangles:
             An Optimal {Poincar\'e} Inequality},
  journal = {Journal of Differential Equations},
  volume  = {249},
  number  = {1},
  pages   = {118--135},
  year    = {2010},
  doi     = {10.1016/j.jde.2010.03.020}
}

@misc{Hatcher2025,
  author        = {Hatcher, Lawford},
  title         = {Hot Spots in Domains of Constant Curvature},
  year          = {2025},
  eprint        = {2508.13353},
  archivePrefix = {arXiv},
  primaryClass  = {math.AP}
}

@article {BenAndrews2013,
    AUTHOR = {Andrews, Ben and Clutterbuck, Julie},
     TITLE = {Sharp modulus of continuity for parabolic equations on
              manifolds and lower bounds for the first eigenvalue},
   JOURNAL = {Anal. PDE},
  FJOURNAL = {Analysis \& PDE},
    VOLUME = {6},
      YEAR = {2013},
    NUMBER = {5},
     PAGES = {1013--1024},
      ISSN = {2157-5045,1948-206X},
   MRCLASS = {35R01 (35B65 35K59 35P15)},
  MRNUMBER = {3125548},
MRREVIEWER = {Rodica\ Luca},
       DOI = {10.2140/apde.2013.6.1013},
       URL = {https://doi.org/10.2140/apde.2013.6.1013},
}

@article { Rohleder2017,
    AUTHOR = {Lotoreichik, Vladimir and Rohleder, Jonathan},
     TITLE = {Eigenvalue inequalities for the {L}aplacian with mixed
              boundary conditions},
   JOURNAL = {J. Differential Equations},
  FJOURNAL = {Journal of Differential Equations},
    VOLUME = {263},
      YEAR = {2017},
    NUMBER = {1},
     PAGES = {491--508},
      ISSN = {0022-0396,1090-2732},
   MRCLASS = {35P15 (35J05)},
  MRNUMBER = {3631314},
       DOI = {10.1016/j.jde.2017.02.043},
       URL = {https://doi.org/10.1016/j.jde.2017.02.043},
}

@article{AnoopAshokKesavan2021,
  author  = {Anoop, T. V. and Ashok Kumar, K. and Kesavan, S.},
  title   = {A shape variation result via the geometry of eigenfunctions},
  journal = {Journal of Differential Equations},
  volume  = {298},
  pages   = {430--462},
  year    = {2021},
  doi     = {10.1016/j.jde.2021.07.001}
}

@article{Berge2024,
  author  = {Berge, Stine Marie},
  title   = {Kuttler--Sigillito inequalities and
             Rellich--Christianson identity},
  journal = {Studia Mathematica},
  volume  = {277},
  number  = {1},
  pages   = {45--63},
  year    = {2024},
  doi     = {10.4064/sm230716-9-6}
}

@article{ChenGuiYao2026,
  author  = {Chen, Hongbin and Gui, Changfeng and Yao, Ruofei},
  title   = {Uniqueness of critical points of the second {Neumann} eigenfunctions on triangles},
  journal = {Inventiones Mathematicae},
  year    = {2026},
  volume  = {244},
  pages   = {299--353},
  doi     = {10.1007/s00222-025-01398-x},
  url     = {https://doi.org/10.1007/s00222-025-01398-x}
}

@article {PayneWeinberger1960,
    AUTHOR = {Payne, L. E. and Weinberger, H. F.},
     TITLE = {An optimal {P}oincar\'e{} inequality for convex domains},
   JOURNAL = {Arch. Rational Mech. Anal.},
  FJOURNAL = {Archive for Rational Mechanics and Analysis},
    VOLUME = {5},
      YEAR = {1960},
     PAGES = {286--292},
      ISSN = {0003-9527},
   MRCLASS = {35.00},
  MRNUMBER = {117419},
MRREVIEWER = {I.\ Stakgold},
       DOI = {10.1007/BF00252910},
       URL = {https://doi-org.lib-proxy.fullerton.edu/10.1007/BF00252910},
}

@article {BakryQian2000,
    AUTHOR = {Bakry, Dominique and Qian, Zhongmin},
     TITLE = {Some new results on eigenvectors via dimension, diameter, and
              {R}icci curvature},
   JOURNAL = {Adv. Math.},
  FJOURNAL = {Advances in Mathematics},
    VOLUME = {155},
      YEAR = {2000},
    NUMBER = {1},
     PAGES = {98--153},
      ISSN = {0001-8708,1090-2082},
   MRCLASS = {58J50 (53C21)},
  MRNUMBER = {1789850},
MRREVIEWER = {Olivier\ Druet},
       DOI = {10.1006/aima.2000.1932},
       URL = {https://doi-org.lib-proxy.fullerton.edu/10.1006/aima.2000.1932},
}

@article {JudgeMondal2020,
    AUTHOR = {Judge, Chris and Mondal, Sugata},
     TITLE = {Euclidean triangles have no hot spots},
   JOURNAL = {Ann. of Math. (2)},
  FJOURNAL = {Annals of Mathematics. Second Series},
    VOLUME = {191},
      YEAR = {2020},
    NUMBER = {1},
     PAGES = {167--211},
      ISSN = {0003-486X,1939-8980},
   MRCLASS = {35P05 (35B38 35J05 35J25 58J50)},
  MRNUMBER = {4045963},
MRREVIEWER = {Luigi\ Provenzano},
       DOI = {10.4007/annals.2020.191.1.3},
       URL = {https://doi-org.lib-proxy.fullerton.edu/10.4007/annals.2020.191.1.3},
}

@article{JudgeMondal2022Erratum,
  author    = {Judge, Chris and Mondal, Sugata},
  title     = {Erratum: Euclidean triangles have no hot spots},
  journal   = {Annals of Mathematics},
  volume    = {195},
  number    = {1},
  pages     = {337--362},
  year      = {2022},
  publisher = {Department of Mathematics of Princeton University},
  doi       = {10.4007/annals.2022.195.1.5},
  url       = {https://annals.math.princeton.edu/2022/195-1/p05}
}

@article{Cheng1976,
author = {Cheng, Shiu-Yuen},
journal = {Commentarii mathematici Helvetici},
pages = {43-56},
title = {Eigenfunctions and Nodal Sets},
url = {http://eudml.org/doc/139642},
volume = {51},
year = {1976},
}

@article{FreitasKennedy2025,
  author  = {Pedro Freitas and James B. Kennedy},
  title   = {On domain monotonicity of Neumann eigenvalues of convex domains},
  journal = {Proceedings of the American Mathematical Society},
  volume  = {153},
  number  = {12},
  pages   = {5315--5328},
  year    = {2025},
  doi     = {10.1090/proc/17394}
}

@article{BW99,
  author  = {Krzysztof Burdzy and Wendelin Werner},
  title   = {A counterexample to the ``hot spots'' conjecture},
  journal = {Annals of Mathematics},
  volume  = {149},
  number  = {1},
  pages   = {309--317},
  year    = {1999},
  doi     = {10.2307/121027}
}

@article{BB99,
  author  = {Rodrigo Ba{\~n}uelos and Krzysztof Burdzy},
  title   = {On the ``hot spots'' conjecture of {J}. {R}auch},
  journal = {Journal of Functional Analysis},
  volume  = {164},
  number  = {1},
  pages   = {1--33},
  year    = {1999},
  doi     = {10.1006/jfan.1999.3397}
}

@article{AB04,
  author  = {Rami Atar and Krzysztof Burdzy},
  title   = {On Neumann eigenfunctions in lip domains},
  journal = {Journal of the American Mathematical Society},
  volume  = {17},
  number  = {2},
  pages   = {243--265},
  year    = {2004},
  doi     = {10.1090/S0894-0347-04-00453-9}
}

@article{LiEdelen2022,
author = {Edelen, Nick and Li, Chao},
title = {Regularity of Free Boundary Minimal Surfaces in Locally Polyhedral Domains},
journal = {Communications on Pure and Applied Mathematics},
volume = {75},
number = {5},
pages = {970-1031},
doi = {https://doi.org/10.1002/cpa.22039},
url = {https://onlinelibrary.wiley.com/doi/abs/10.1002/cpa.22039},
eprint = {https://onlinelibrary.wiley.com/doi/pdf/10.1002/cpa.22039},
year = {2022}
}
\end{document}